\newcolumntype{L}[1]{>{\raggedright\let\newline\\\arraybackslash\hspace{0pt}}m{#1}}
\newcolumntype{C}[1]{>{\centering\let\newline\\\arraybackslash\hspace{0pt}}m{#1}}
\newcolumntype{R}[1]{>{\raggedleft\let\newline\\\arraybackslash\hspace{0pt}}m{#1}}
\definecolor{darkgreen}{rgb}{0,0.7,0}
\numberwithin{equation}{section}
\newtheorem{theorem}{Theorem}[section]
\newtheorem{proposition}[theorem]{Proposition}
\newtheorem{lemma}[theorem]{Lemma}
\newtheorem{corollary}[theorem]{Corollary}
\theoremstyle{definition}
\newtheorem{remark}[theorem]{Remark}
\newtheorem{example}[theorem]{Example}
\DeclareMathOperator{\Span}{Span}
\DeclareMathOperator{\Proj}{Proj}
\newcommand{\set}[1]{{\left\lbrace #1 \right\rbrace}}
\newcommand{\br}[1]{{\left\langle #1 \right\rangle}}
\newcommand{\ck}{^{\vee}}
\newcommand{\integers}{\mathbb{Z}}
\newcommand{\aff}{\mathrm{aff}}
\newcommand{\fin}{\mathrm{fin}}
\newcommand{\re}{\mathrm{re}}
\newcommand{\reals}{\mathbb{R}}
\newcommand{\laff}{\triangleleft}
\newcommand{\gaff}{\triangleright}
\newcommand{\eigenspace}[1]{U^{#1}}
\newcommand{\RSChar}{\Phi}
\newcommand{\RS}{\RSChar}
\newcommand{\RSre}{\RS^\re}
\newcommand{\RSpos}{\RS^+}
\newcommand{\RSfin}{\RS_\fin}
\newcommand{\RSfinpos}{\RSfin^+}
\newcommand{\SimplesChar}{\Pi}
\newcommand{\Simples}{\SimplesChar}
\newcommand{\RSTChar}{\Upsilon}
\newcommand{\RST}[1]{\RSTChar^{#1}}
\newcommand{\RSTfin}[1]{\RST{#1}_\fin}
\newcommand{\SimplesTChar}{\Xi}
\newcommand{\SimplesT}[1]{\SimplesTChar^{#1}}
\newcommand{\TravInfChar}{\Psi}
\newcommand{\TravInf}[1]{\TravInfChar^{#1}}
\newcommand{\TravProj}[1]{\overrightarrow{\TravInfChar}^{#1}}
\newcommand{\TravInj}[1]{\overleftarrow{\TravInfChar}^{#1}}
\newcommand{\TravRegChar}{\Omega}
\newcommand{\TravReg}[1]{\TravRegChar^{#1}}
\newcommand{\AP}[1]{\RS_{#1}}
\newcommand{\dynkinradius}{.06cm}
\newcommand{\dynkinstep}{.5cm}
\newcommand{\dynkinlinesep}{.08cm}
\newcommand{\dynkinaffinedot}[4]{\fill[fill=red] (\dynkinstep*#1,\dynkinstep*#2) circle (\dynkinradius); \node at (\dynkinstep*#1,\dynkinstep*#2) [font=\tiny,#4] {$#3$};}
\newcommand{\dynkindot}[4]{\fill (\dynkinstep*#1,\dynkinstep*#2) circle (\dynkinradius); \node at (\dynkinstep*#1,\dynkinstep*#2) [font=\tiny,#4] {$#3$};}
\tikzset{
  line/.style={thin},
  dotline/.style={dotted},
  doubleline/.style={thin,double distance=\dynkinlinesep,postaction={decorate,decoration={markings,mark=at position 0.7 with {\arrow[line width=0.06cm]{angle 60}}}}},
  tripleline/.style={thin,double distance=\dynkinlinesep*1.5,postaction={decorate,decoration={markings,mark=at position 0.7 with {\arrow[line width=0.06cm]{angle 60}}}},postaction={draw}}
}
\newcommand{\dynkinline}[4]{\draw[line] (\dynkinstep*#1,\dynkinstep*#2) -- (\dynkinstep*#3,\dynkinstep*#4);}
\newcommand{\dynkindotline}[4]{\draw[dotline] (\dynkinstep*#1,\dynkinstep*#2) -- (\dynkinstep*#3,\dynkinstep*#4);}
\newcommand{\dynkindoubleline}[4]{\draw[doubleline] (\dynkinstep*#1,\dynkinstep*#2) -- (\dynkinstep*#3,\dynkinstep*#4);}
\newcommand{\dynkintripleline}[4]{\draw[tripleline] (\dynkinstep*#1,\dynkinstep*#2) -- (\dynkinstep*#3,\dynkinstep*#4);}
\newenvironment{dynkin}{\begin{tikzpicture}[baseline]}{\end{tikzpicture}}
\newcommand{\newword}[1]{\textbf{\emph{#1}}}
\author{Nathan Reading}
\author{Salvatore Stella}
\title{The action of a Coxeter element on an affine root system}
\address[N. Reading]{Department of Mathematics, North Carolina State University, Raleigh, NC, USA}
\address[S. Stella]{Department of Mathematics \& Department of Computer Science, University of Haifa, Haifa, Mount Carmel 31905, Israel}
\thanks{Nathan Reading was supported in part by NSF grants DMS-1101568 and DMS-1500949.\\ \indent Salvatore Stella was partially supported by NCSU, INdAM, and the ISF grant 1144/16.}
\begin{document}

\begin{abstract}
The characterization of orbits of roots under the action of a Coxeter element is a fundamental tool in the study of finite root systems and their reflection groups.
This paper develops the analogous tool in the affine setting, adding detail and uniformity to a result of Dlab and Ringel.
\end{abstract}

\maketitle

\vspace{-8pt}

\setcounter{tocdepth}{1}
\tableofcontents

\section{Introduction}\label{intro}
Among the most important early results in the study of finite reflection groups is the description of orbits of roots (or reflecting hyperplanes) under the action of a Coxeter element---the product of a permutation of the simple reflections.
This description was proved uniformly by Steinberg~\cite{SteinbergFRG}, who analyzed the action of Coxeter elements on the Coxeter plane---a certain plane first considered by Coxeter in \cite{CoxeterRP}.
(See also \cite{plane}.)
Steinberg's uniform construction of the Coxeter plane was based on a careful analysis of eigenvalues and eigenvectors which led to (and was motivated by) a uniform proof of the formula $nh$ for the number of roots, where $n$ is the rank of the root system and $h$ is its \newword{Coxeter number} (the order of a Coxeter element).

We quote a version of this result that gives a \newword{transversal} of the orbits (a collection consisting of exactly one element from each of the orbits).
Let $\Phi$ be a root system of rank $n$, and let $c$ be any Coxeter element in the associated Weyl group.
We label the simple roots so that $c=s_1\cdots s_n$ where $s_i$ is the reflection with respect to the simple root $\alpha_i$ of $\Phi$. 
Define
\begin{align}
\label{rep->}
\TravProj{c}&:=\set{\alpha_1,s_1\alpha_2,\ldots,s_1\cdots s_{n-1}\alpha_n},\\
\label{rep<-}
\TravInj{c}&:=\set{\alpha_n,s_n\alpha_{n-1},\ldots,s_n\cdots s_2\alpha_1},
\end{align}%
\nomenclature[zzyzzzzc0]{$\TravInf{c}$}{$\TravProj{c}\cup\TravInj{c}$, transversal of infinite $c$-orbits in $\RS$}%
and write $\TravInf{c}$ for the union of the two sets.
The following is \cite[Proposition~VI.1.33]{bourbaki}.
\begin{theorem}\label{fin c-orbits}
Suppose $\RS$ is an irreducible finite root system and $c$ is a Coxeter element in the associated Weyl group $W$.
There are exactly $n$ $c$-orbits in $\RS$. 
The set $\TravProj{c}$ is a transversal of these orbits.
The set $\TravInj{c}$ is also a transversal of these orbits.
Each orbit has cardinality equal to the Coxeter number $h$.
\end{theorem}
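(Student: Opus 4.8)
The plan is to reduce the whole statement to a single assertion about one of the two transversals and then extract that assertion from the geometry of the Coxeter plane. Write $\beta_i := s_1\cdots s_{i-1}\alpha_i$ for the elements of $\TravProj{c}$ and $\gamma_i := s_n\cdots s_{i+1}\alpha_i$ for those of $\TravInj{c}$. A one-line telescoping computation gives the pleasant identity $c\,\gamma_i=-\beta_i$, so the two candidate transversals are tightly linked. I will \emph{not}, however, deduce the $\TravInj{c}$ statement from this identity directly, because a $c$-orbit need not be stable under $\alpha\mapsto-\alpha$ (already in type $A_2$ the orbit of $\alpha_1$ does not contain $-\alpha_1$). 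Instead I observe that relabelling $\alpha_i\mapsto\alpha_{n+1-i}$ turns $c^{-1}=s_n\cdots s_1$ into a Coxeter element whose set $\TravProj{c^{-1}}$ is exactly $\TravInj{c}$. Since $c$ and $c^{-1}$ generate the same cyclic group and hence have the same orbits, the assertion for $\TravInj{c}$ follows once $\TravProj{c}$ is known to be a transversal for an arbitrary Coxeter element $c$. Thus it suffices to prove three things for $\TravProj{c}$: the $\beta_i$ are distinct roots, they lie in pairwise distinct $c$-orbits, and every orbit has cardinality $h$.

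First, the $\beta_i$ are $n$ distinct positive roots: they form the inversion set of the reduced word $s_1\cdots s_n$ (the positive roots $\alpha$ with $c^{-1}\alpha<0$), a set of cardinality $\ell(c)=n$. Next I show every $c$-orbit in $\Phi$ has cardinality exactly $h$. The key geometric input, due to Steinberg, is the \emph{Coxeter plane}: because $\Phi$ is irreducible, $c$ has order $h$ and no nonzero fixed vector, and it admits a $2$-dimensional real invariant subspace $P$ on which it acts as a rotation $R$ through the angle $2\pi/h$, with the crucial property that every root projects to a \emph{nonzero} vector of $P$. Given a root $\delta$ whose orbit has size $d\mid h$, the relation $c^d\delta=\delta$ forces $R^d$ to fix the nonzero projection $\bar\delta$, whence $h\mid d$ and so $d=h$. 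Consequently, using the count $|\Phi|=nh$ recalled in the introduction, there are exactly $|\Phi|/h=n$ orbits.

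It remains to see that $\beta_1,\dots,\beta_n$ meet $n$ \emph{distinct} orbits; since there are exactly $n$ orbits and $|\TravProj{c}|=n$, this is equivalent to $\TravProj{c}$ being a transversal. If $c^k\beta_i=\beta_j$, projecting to $P$ gives $R^k\bar\beta_i=\bar\beta_j$, so the two projections are related by a power of $R$. The real content is therefore to show that the projections $\bar\beta_1,\dots,\bar\beta_n$ lie in pairwise distinct $R$-orbits in $P$, i.e.\ that their angular arguments are pairwise incongruent modulo $2\pi/h$. This angular arrangement of the projected roots is the step I expect to be the main obstacle, and it is exactly the delicate part of Steinberg's analysis of the Coxeter plane; once it is in place, $c^k\beta_i=\beta_j$ forces $i=j$, and combined with the previous paragraph this shows that $\TravProj{c}$—and hence, by the reduction above, $\TravInj{c}$—is a transversal of the $n$ orbits, each of size $h$.
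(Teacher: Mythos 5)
First, a point of comparison: the paper does not prove this statement at all --- it is quoted as \cite[Proposition~VI.1.33]{bourbaki} --- so there is no in-paper proof to measure you against. Judged on its own terms, most of your scaffolding is sound: identifying $\TravProj{c}$ with the set of positive roots associated to the inversions of the reduced word $s_1\cdots s_n$ (hence $n$ distinct positive roots), reducing the $\TravInj{c}$ claim to $\TravProj{c^{-1}}$, and deducing that every orbit has size exactly $h$ from the Coxeter-plane facts (no root projects to zero, and $R^d$ fixes a nonzero vector of $P$ only if $h\mid d$), whence $|\RS|/h=n$ orbits. Those Coxeter-plane inputs are legitimately citable to Steinberg.

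The gap is the final step, which you flag but do not close: you never show that $\beta_1,\dots,\beta_n$ lie in distinct $c$-orbits. You reduce this to the claim that the projections $\bar\beta_1,\dots,\bar\beta_n$ have pairwise incongruent angular arguments modulo $2\pi/h$, but that claim is strictly stronger than what you need --- two roots whose projections lie in the same $R$-orbit of $P$ need not lie in the same $c$-orbit, since projection to $P$ is far from injective in rank $\ge 3$ --- and it is not part of the standard Coxeter-plane package; I do not see how to establish it for an arbitrary (non-bipartite) Coxeter element, and your argument would collapse if two of the $\bar\beta_i$ happened to share an angle mod $2\pi/h$ even though the theorem remained true. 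The step can be closed much more cheaply, with no angular analysis: the sum of the roots in a $c$-orbit is a $c$-fixed vector, hence equals $0$ by \cref{cox fix}, so every orbit contains both positive and negative roots; going around the cyclic orbit there must be a sign change, so every orbit contains a positive root $\beta$ with $c^{-1}\beta<0$, i.e.\ an element of the inversion set $\TravProj{c}$. Since there are exactly $n$ orbits and $|\TravProj{c}|=n$, each orbit meets $\TravProj{c}$ exactly once, which is the transversal statement. I recommend replacing your last paragraph with this argument.
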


Dlab and Ringel~\cite{Dlab76} proved an analogous result for the action of a Coxeter element~$c$ on an affine root system $\RS$.
Here, we improve on~\cite{Dlab76} by arguing uniformly (rather than type-by-type in the classification of affine root systems) and by clarifying some details for finite orbits.
Our immediate motivation is to support an almost-positive roots model \cite{affdenom} for cluster algebras of affine type.
(The finite-type model \cite{associahedra,FoZe03a,FoZe03,MRZ,Ste13} 
uses \cref{fin c-orbits} by way of \cite[Exercise~V\S6.2]{bourbaki}.)

The arguments in~\cite{Dlab76} for infinite orbits are easy and uniform, and ours are the same.  
The difficulty lies in the treatment of finite orbits.
Just as Steinberg's uniform proof of \cref{fin c-orbits} rests on an analysis of eigenvalues and eigenvectors, our analysis of finite orbits relies on a characterization, in \cref{eigen}, of the eigenvalues and eigenvectors of a Coxeter element in a Weyl group of affine type.
(Previous work on the affine eigenvalue problem, with different emphases, includes~\cite{AC,BLM,Coleman,Howlett,SteinbergEigen}.)
We will see that~$c$ has a linearly independent set of $(n-1)$ eigenvectors, spanning a hyperplane that we call~$\eigenspace{c}$.  We define $\RST{c}$ to be $\RS\cap \eigenspace{c}$\nomenclature[zzuc]{$\RST{c}$}{$\RS\cap \eigenspace{c}$}. 
Writing $\RSfin$ for the finite root system associated to $\RS$, we write $\RSTfin{c}$ for $\RSfin\cap \eigenspace{c}$.\nomenclature[zzufinc]{$\RSTfin{c}$}{$\RSfin\cap \eigenspace{c}$}
We will see in \cref{Up details} that $\RSTfin{c}$ is a root system of rank $n-2$.  
There is canonical system $\SimplesT{c}_\fin$ \nomenclature[zzoc]{$\SimplesT{c}$}{simple roots for $\RSTfin{c}$} of simple roots for $\RSTfin{c}$ (the unique simple system for $\RSTfin{c}$ that is a subset of~$\RSpos$).  

Let $\delta$ be the positive imaginary root closest to the origin.
Let $\aff\in\set{1,\ldots,n}$ be the index such that $\alpha_\aff$ is the unique simple root of $\RS$ that is not in $\RSfin$.
To keep track of the location of the letter $s_\aff$ in the expression $s_1\cdots s_n$ for $c$, we let $c_\laff=s_1\cdots s_{\aff-1}$ and let $c_\gaff=s_{\aff+1}\cdots s_n$, so that $c=c_\laff s_\aff c_\gaff$.
We will see in \cref{Up details} that the roots in $\SimplesT{c}_\fin$ can be ordered $\beta_1,\dots,\beta_{n-2}$ so that $t_{\beta_1}\cdots t_{\beta_{n-2}}=c_\laff t_\theta c_\gaff $, where $\theta$ is the highest root or highest short root in $\RSfinpos$ as explained in \cref{sec:root_systems} and $t_\alpha$ stands for the reflection orthogonal to a root~$\alpha$.
For such an ordering of $\SimplesT{c}_\fin$, define $\TravReg{c}:=\set{\beta_1,t_{\beta_1}\beta_2,\ldots,t_{\beta_1}\cdots t_{\beta_{n-3}}\beta_{n-2}}$.  \nomenclature[zzzc1]{$\TravReg{c}$}{transversal of the finite $c$-orbits in $\AP{c}$}
For any $\beta\in\TravReg{c}$, let $\kappa(\beta)$ be the smallest positive integer such that $\kappa(\beta)\delta-\beta$ is a root.
(The existence of $\kappa(\beta)$ will be established as part of the proof of the main theorem, where we will also see that $\kappa(\beta)$ is constant on components of $\RSTfin{c}$.)

We now state the result on $c$-orbits in affine type.  
\begin{theorem}[Cf. Dlab-Ringel \protect{\cite[Chapter~1]{Dlab76}}]\label{aff c-orbits}
  Suppose $\RS$ is an affine root system and $c$ is a Coxeter element in the associated Weyl group $W$.   
\begin{enumerate}[\qquad \upshape (1)]
    \item	\label{infinite transversal}
      There are exactly $2n$ infinite $c$-orbits in $\RS$. 
      The set $\TravInf{c}$ is a transversal of these orbits.
    \item \label{criterion}
      The $c$-orbit of a root $\beta\in\RS$ is finite if and only if $\beta\in \eigenspace{c}$.
    \item \label{im orb}
      Every imaginary root is fixed by $c$.
    \item \label{finite transversal}
      For $\RS$ of rank $2$, there are no finite $c$-orbits of real roots.
      For larger rank, there are infinitely many finite $c$-orbits of real roots and the set $\set{\beta + m\cdot\kappa(\beta)\delta:\beta\in\TravReg{c},\,m\in\integers}$ is a transversal of them.
    \item	\label{pos neg}
      Each finite $c$-orbit contains either only positive roots or only negative roots. 
      In particular, the $c$-orbit of a real root $\beta + m\cdot\kappa(\beta)\delta$ for $\beta\in\TravReg{c}$ consists of positive roots if and only if $m\ge0$.
    \item \label{finite transversal finite}
      A finite $c$-orbit intersects $\RSfinpos$ if and only if it intersects $\TravReg{c}$.
  \end{enumerate}		
\end{theorem}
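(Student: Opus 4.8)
The plan is to build everything on the eigenvalue analysis of \cref{eigen} and the description of $\eigenspace{c}$ and $\RSTfin{c}$ in \cref{Up details}, reducing all questions about finite orbits to the finite-type \cref{fin c-orbits} applied to $\RSTfin{c}$. Part~(3) is immediate: $\delta$ spans the radical of the form and every reflection fixes the radical, so $c$ fixes $\delta$ and hence every $m\delta$. For part~(2) I will use from \cref{eigen} that $c$ has exactly one nontrivial Jordan block, of size $2$ for the eigenvalue $1$ (with eigenvector $\delta$), and that $\eigenspace{c}$ is spanned by $\delta$ together with the eigenvectors for the remaining eigenvalues, all roots of unity. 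Then $c|_{\eigenspace{c}}$ has finite order, so any root in $\eigenspace{c}$ has finite orbit; conversely, writing a root off $\eigenspace{c}$ as $u+av$ with $u\in\eigenspace{c}$, $a\ne0$, and $cv=v+\delta$, the $\delta$-coefficient of $c^k(u+av)$ grows linearly in $k$, so the orbit is infinite. For part~(1) I argue uniformly as in Dlab--Ringel: with $N(w)=\set{\rho\in\RSpos:w\rho\in\RSneg}$ one has $N(c^{-1})=\TravProj{c}$ and $N(c)=\TravInj{c}$; iterating $c$ drives the $\delta$-level of each of these roots to $+\infty$ (and $c^{-1}$ to $-\infty$), so each generates an infinite orbit, and a sign-tracking argument along each bi-infinite orbit shows $\TravInf{c}$ meets every infinite orbit exactly once, for $2n$ orbits in all.

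The heart is the finite orbits. From \cref{Up details} I will take $\eigenspace{c}=\Span(\RSTfin{c})\oplus\reals\delta$, so that the real roots in $\eigenspace{c}$ are exactly those whose finite part lies in $\RSTfin{c}$; reducing modulo $\delta$, the element $c$ descends to $\bar c=c_\laff t_\theta c_\gaff=t_{\beta_1}\cdots t_{\beta_{n-2}}$, a Coxeter element of $\RSTfin{c}$. Using $s_\aff=t_{\delta-\theta}$ I will compute, for $\bar\gamma\in\RSTfin{c}$, that $c(\bar\gamma+k\delta)=\bar c\,\bar\gamma+\bigl(k+f(\bar\gamma)\bigr)\delta$ with $f(\bar\gamma)=\br{c_\gaff\bar\gamma,\theta\ck}$ an explicit integer. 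Since each such root lies in $\eigenspace{c}$ and so has finite orbit by part~(2), the total of $f$ around every $\bar c$-orbit must vanish, whence each $c$-orbit projects bijectively onto a full $\bar c$-orbit of $\RSTfin{c}$.

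Next I will show that $\kappa(\beta)$ exists and is constant on each component of $\RSTfin{c}$, equal to the $\delta$-spacing of the real roots of $\eigenspace{c}$ with a fixed finite part, so that the real roots of $\eigenspace{c}$ in direction $\beta\in\TravReg{c}$ are exactly $\beta+\kappa(\beta)\,\integers\,\delta$. Applying \cref{fin c-orbits} to each component of $\RSTfin{c}$ makes $\TravReg{c}$ a transversal of the $\bar c$-orbits; recording the level at which the finite part $\beta$ appears then shows the orbits of $\beta+m\kappa(\beta)\delta$ are pairwise distinct and exhaust the finite real orbits, proving part~(4) (in rank $2$, $\RSTfin{c}=\emptyset$ and there are none). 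Purity—the first assertion of part~(5)—is the cleanest step: a positive-to-negative sign change inside an orbit would place a positive root $\sigma$ with $c\sigma\in\RSneg$, i.e.\ $\sigma\in N(c)=\TravInj{c}$, which lies in an infinite orbit by part~(1); the negative-to-positive case is symmetric, so a finite orbit has constant sign.

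The remaining sign statement in part~(5) then follows by reading off the level $m\kappa(\beta)$ of the representative: the orbit is positive for $m\ge0$ (using $\beta\in\RSfinpos$ when $m=0$) and negative for $m\le-1$. For part~(6), the inclusion $\TravReg{c}\subseteq\RSfinpos$ gives one direction; conversely, a finite orbit meeting $\RSfinpos$ contains a level-$0$ positive root, hence is positive and so has $m\ge0$, while for $m\ge1$ the orbit is the $m=0$ orbit translated up by $m\kappa(\beta)\delta$ and therefore has no level-$0$ root, forcing $m=0$ and an intersection with $\TravReg{c}$. I expect the main difficulty to lie in the inputs quoted from \cref{Up details}: establishing $\eigenspace{c}=\Span(\RSTfin{c})\oplus\reals\delta$, identifying $\bar c$ with $t_{\beta_1}\cdots t_{\beta_{n-2}}$, and controlling $\kappa(\beta)$ and its constancy on components in the twisted types, together with verifying that the cocycle $f$ sums to zero around each $\bar c$-orbit; granting these, purity and the positivity and transversal statements drop out as above.
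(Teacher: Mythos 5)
Your overall architecture tracks the paper's: part (3) is trivial, part (2) follows from the Jordan structure in \cref{eigen}, purity in part (5) comes from the sign-change lemma plus part (1), and parts (4) and (6) reduce to the structure of $\RSTfin{c}$. But at the two points that carry the real content you assert precisely what needs to be proved. In part (1), your claim that iterating $c$ drives the $\delta$-level of every root of $\TravProj{c}$ to $+\infty$ (rather than $-\infty$) is equivalent to $K(\gamma_c,\beta)>0$ for all $\beta\in\TravProj{c}$; this is the paper's \cref{useful}, proved by an induction over source-sink moves using $x_c=\omega_c(\delta,\,\cdot\,)$ and \cref{xc lemma}, and it is not free. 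Without it, sign-tracking along a bi-infinite orbit only shows that the number of elements of $\TravProj{c}$ in the orbit exceeds the number of elements of $\TravInj{c}$ by one (each $-$-to-$+$ transition lands on $\TravProj{c}$, each $+$-to-$-$ departs from $\TravInj{c}$), which does not give ``exactly once.'' The paper instead gets distinctness and positivity directly from \cref{Speyer} (reducedness of $(s_1\cdots s_n)^k$), a genuinely nontrivial input; some such input must appear in your argument.

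For part (4), you need that $\kappa(\beta)$ exists, is constant on components, and that the real roots of $\eigenspace{c}$ with finite part $\beta$ are exactly $\beta+\kappa(\beta)\integers\delta$. You correctly flag this as the main difficulty but give no mechanism for it, and it does not follow from the statement of \cref{Up details} alone. The paper obtains it by showing that each $\widetilde\Theta=\RS\cap\Span(\Theta\cup\set{\delta})$ is of type $A^{(1)}_k$ with $c$ rotating its Dynkin diagram---its canonical simple system is a single $c$-orbit, of size $h=k+1$ by \cref{fin c-orbits}---whence $\beta_0+\beta_1+\cdots+\beta_k=\kappa\delta$ and the spacing follows. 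Your alternative route---descend to $\bar c=c_\laff t_\theta c_\gaff$ on $\RSTfin{c}$, apply \cref{fin c-orbits} there, and lift via the cocycle $f$, whose sum around each $\bar c$-orbit vanishes by finiteness (a clean observation)---is viable and close to the paper's, but it still requires the $\delta$-spacing fact to decide which lifts $\beta+k\delta$ are actually roots in the twisted types, so the gap remains. The rest (purity, the sign statement in part (5), and part (6)) is correct as you argue it, granted the above.
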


\begin{example}\label{ex:orb}  
Consider the Cartan matrix 
$A =\begin{bsmallmatrix*}[r]
        2 & -2 & 0 \\
        -1 & 2 & -1 \\
        0 & -2 & 2
\end{bsmallmatrix*}$
corresponding to the Dynkin diagram
\raisebox{3pt}{    \begin{dynkin}
    \dynkindoubleline{1}{0}{-0.25}{0}
    \dynkindoubleline{1}{0}{2.25}{0}
      \dynkindot{-0.25}{0}{1}{below}
      \dynkindot{1}{0}{2}{below}
      \dynkinaffinedot{2.25}{0}{3}{below}
    \end{dynkin}}
of type $D_3^{(2)}$.
(The associated root system is not a standard affine root system in the sense of \cref{ex tab}.)
  Let $c=s_1s_2s_3$.
In the basis of simple roots $\Simples=\{\alpha_1,\alpha_2,\alpha_3\}$, $c$ is given by the matrix
  $
 \begin{bsmallmatrix*}[r]
        1 & \,\,2 & -2 \\
        1 & 1 & -1 \\
        0 & 2 & -1
\end{bsmallmatrix*}
 $.
Its eigenvectors are $\alpha_1+\alpha_3$ with eigenvalue~$-1$, and $\delta=\alpha_1+\alpha_2+\alpha_3$ with eigenvalue~$1$.
(Looking forward to \cref{eigen}, the generalized $1$-eigenvector $\gamma_c$ is $\alpha_1+\frac{1}{2}\alpha_2$.)
  
There are $6$ infinite $c$-orbits and a transversal of them is $\TravInf{c}=\TravInj{c}\cup\TravProj{c}$ with
  \begin{eqnarray*}
    \TravInj{c}=
    \{
      s_3s_2\alpha_1 = \alpha_1+\alpha_2+2\alpha_3,
      \quad
      s_3\alpha_2 = \alpha_2+2\alpha_3,
      \quad
      \alpha_3
    \}\hphantom{.}
    \\
    \TravProj{c}=
    \{
      \alpha_1,
      \quad
      s_1\alpha_2 = 2\alpha_1+\alpha_2,
      \quad
      s_1s_2\alpha_3 = 2\alpha_1+\alpha_2+\alpha_3
    \}.
\end{eqnarray*}
We have $\TravReg{c}=\set{\alpha_2}$, and the $c$-orbit of $\alpha_2$ is $\set{\alpha_2,2\alpha_1+\alpha_2+2\alpha_3}$.
Since $\delta-\alpha_2=\alpha_1+\alpha_3$ is not a root but $2\delta-\alpha_2=2\alpha_1+\alpha_2+2\alpha_3$ is a root, we have $\kappa(\alpha_2)=2$.
The finite orbits are the $2\delta$-translates of this orbit.

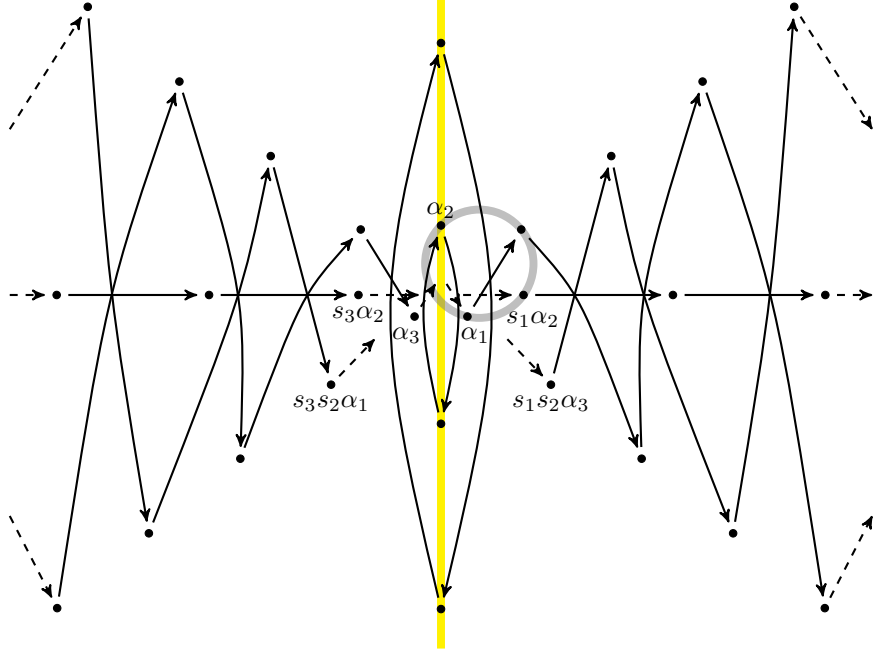
\begin{figure}
  \begin{tikzpicture}[scale=0.147]
    \pgfdeclarelayer{bg}
    \pgfsetlayers{bg,main}

    \node[draw,circle,inner sep=1pt,fill,outer sep=3pt, label={[label distance=-5pt] $\alpha_2$}] (010) at (0.0, 6.2925287399) {};
    \node[draw,circle,inner sep=1pt,fill,outer sep=3pt] (111) at (0.0, -11.6568542495) {};
    \draw [thick, ->, >=stealth'] (010) .. controls (2,0) and (2,-3) ..  (111);
    \draw [thick, ->, >=stealth'] (111) .. controls (-2,-3) and (-2,0) ..  (010);

    \node[draw,circle,inner sep=1pt,fill,outer sep=3pt] (131) at (0.0, 22.8028337921) {};
    \node[draw,circle,inner sep=1pt,fill,outer sep=3pt] (232) at (0.0, -28.4249924808) {};
    \draw [thick, ->, >=stealth'] (131) .. controls (6,0) and (6,-3) ..  (232);
    \draw [thick, ->, >=stealth'] (232) .. controls (-6,-3) and (-6,0) ..  (131);

    \node[draw,circle,inner sep=1pt,fill,outer sep=3pt] (384) at (-31.9448643066, 26.0828724845) {};
    \node[draw,circle,inner sep=1pt,fill,outer sep=3pt] (364) at (-26.4180193581, -21.5702224808) {};
    \node[draw,circle,inner sep=1pt,fill,outer sep=3pt] (142) at (-15.3979123914, 12.572342821) {};
    \node[draw,circle,inner sep=1pt,fill,outer sep=3pt, label={[label distance=-5pt, yshift=-1.3em] $s_3s_2\alpha_1$}] (122) at (-9.9409110857, -8.1167199128) {};
    \draw [thick, ->, >=stealth', dashed] (-39,15) -- (384);
    \draw [thick, ->, >=stealth'] (384) .. controls (-30,0) ..  (364);
    \draw [thick, ->, >=stealth'] (364) .. controls (-18,0) ..  (142);
    \draw [thick, ->, >=stealth'] (142) -- (122);
    \draw [thick, ->, >=stealth', dashed] (122) -- (-6, -4);

    \node[draw,circle,inner sep=1pt,fill,outer sep=3pt, label={[label distance=-5pt, xshift=3pt, yshift=-1.3em] $\alpha_1$}] (100) at (2.3898765979, -1.951326071) {};
    \node[draw,circle,inner sep=1pt,fill,outer sep=3pt] (120) at (7.2588348073, 5.9268138017) {};
    \node[draw,circle,inner sep=1pt,fill,outer sep=3pt] (342) at (18.1455919264, -14.8158137668) {};
    \node[draw,circle,inner sep=1pt,fill,outer sep=3pt] (362) at (23.6573394154, 19.3161367465) {};
    \node[draw,circle,inner sep=1pt,fill,outer sep=3pt] (584) at (34.7101601711, -28.3407271032) {};
    \draw [thick, ->, >=stealth', dashed] (0.5, 1) -- (100);
    \draw [thick, ->, >=stealth'] (100) -- (120);
    \draw [thick, ->, >=stealth'] (120) .. controls (12.6,0) .. (342);
    \draw [thick, ->, >=stealth'] (342) .. controls (17.7,0) .. (362);
    \draw [thick, ->, >=stealth'] (362) .. controls (30.3,0) .. (584);
    \draw [thick, ->, >=stealth', dashed] (584) -- (39,-20);

    \node[draw,circle,inner sep=1pt,fill,outer sep=3pt] (253) at (-34.7561038499, 0.0) {};
    \node[draw,circle,inner sep=1pt,fill,outer sep=3pt] (132) at (-20.9753100897, 0.0) {};
    \node[draw,circle,inner sep=1pt,fill,outer sep=3pt, label={[label distance=-5pt, yshift=-1.3em] $s_3\alpha_2$}] (011) at (-7.4641016151, 0.0) {};
    \draw [thick, ->, >=stealth', dashed] (-39,0) -- (253);
    \draw [thick, ->, >=stealth'] (253) -- (132);
    \draw [thick, ->, >=stealth'] (132) -- (011);
    \draw [thick, ->, >=stealth', dashed] (011) -- (-2, 0);
    
    \node[draw,circle,inner sep=1pt,fill,outer sep=3pt, label={[label distance=-5pt, xshift=0.35em, yshift=-1.5em] $s_1\alpha_2$}] (110) at (7.4641016151, 0.0) {};
    \node[draw,circle,inner sep=1pt,fill,outer sep=3pt] (231) at (20.9753100897, 0.0) {};
    \node[draw,circle,inner sep=1pt,fill,outer sep=3pt] (352) at (34.7561038499, 0.0) {};
    \draw [thick, ->, >=stealth', dashed] (2, 0) -- (110);
    \draw [thick, ->, >=stealth'] (110) -- (231);
    \draw [thick, ->, >=stealth'] (231) -- (352);
    \draw [thick, ->, >=stealth', dashed] (352) -- (39,0);

    \node[draw,circle,inner sep=1pt,fill,outer sep=3pt] (485) at (-34.7101601711, -28.3407271032) {};
    \node[draw,circle,inner sep=1pt,fill,outer sep=3pt] (263) at (-23.6573394154, 19.3161367465) {};
    \node[draw,circle,inner sep=1pt,fill,outer sep=3pt] (243) at (-18.1455919264, -14.8158137668) {};
    \node[draw,circle,inner sep=1pt,fill,outer sep=3pt] (021) at (-7.2588348073, 5.9268138017) {};
    \node[draw,circle,inner sep=1pt,fill,outer sep=3pt, label={[label distance=-5pt, xshift=-3pt, yshift=-1.3em] $\alpha_3$}] (001) at (-2.3898765979, -1.951326071) {};
    \draw [thick, ->, >=stealth', dashed] (-39,-20) -- (485);
    \draw [thick, ->, >=stealth'] (485) .. controls (-30.3,0) ..  (263);
    \draw [thick, ->, >=stealth'] (263) .. controls (-17.7,0) ..  (243);
    \draw [thick, ->, >=stealth'] (243) .. controls (-12.6,0) ..  (021);
    \draw [thick, ->, >=stealth'] (021) -- (001);
    \draw [thick, ->, >=stealth', dashed] (001) -- (-0.5, 1);

    \node[draw,circle,inner sep=1pt,fill,outer sep=3pt, label={[label distance=-5pt, yshift=-1.3em] $s_1s_2\alpha_3$}] (221) at (9.9409110857, -8.1167199128) {};
    \node[draw,circle,inner sep=1pt,fill,outer sep=3pt] (241) at (15.3979123914, 12.572342821) {};
    \node[draw,circle,inner sep=1pt,fill,outer sep=3pt] (463) at (26.4180193581, -21.5702224808) {};
    \node[draw,circle,inner sep=1pt,fill,outer sep=3pt] (483) at (31.9448643066, 26.0828724845) {};
    \draw [thick, ->, >=stealth', dashed] (6,-4) -- (221);
    \draw [thick, ->, >=stealth'] (221) -- (241);
    \draw [thick, ->, >=stealth'] (241) .. controls (18,0) .. (463);
    \draw [thick, ->, >=stealth'] (463) .. controls (30,0) .. (483);
    \draw [thick, ->, >=stealth', dashed] (483) -- (39,15) ;

    \begin{pgfonlayer}{bg}
      \draw[color=yellow, line width=3pt] (0, 27) -- (0, -32);

      \tkzCircumCenter(100,010,120)\tkzGetPoint{O}
      \tkzDrawCircle[color=gray, line width=3pt, opacity=0.5](O,100)
    \end{pgfonlayer}

  \end{tikzpicture}
\caption{The action of $c$ on positive roots, for $\RS$ and $c$ as in \cref{ex:orb}}
\label{fig:orb}
\end{figure}  
We depict parts of some orbits in \cref{fig:orb}. 
The figure is obtained by drawing a ray from the origin through each root, taking the intersection with a unit sphere centered at $0$ and stereographically projecting the result from the direction~$\delta$. 
We plot only positive roots of small height.
Negative roots would be concentrated in the center of the picture, while positive roots of greater height would be further out towards the edge of the figure. 
The imaginary root $\delta$ is  the point at infinity.
The yellow line indicates $\eigenspace{c}=\Span\{\alpha_1+\alpha_3,\delta\}$, and the gray circle indicates $\Span\RSfin$.
Arrows give the action of $c$ on roots.
Arrows to or from roots that do not appear in the figure are shown dotted.
\end{example}

\begin{example}\label{ex tab}
\begin{table}[bp]
\medskip
  \centering
  \begin{tabular}{|c|c|c|l|}
    
    \hline
    
    \hspace{-0.25ex}Type\hspace{-0.25ex} & Diagram of $\RS$ & \hspace{-0.55ex}Diagram of $\RSTfin{c}$\hspace{-0.55ex} & \multicolumn{1}{c|}{Simple roots of $\RSTfin{c}$}\\
    
    \hline&&&\\[-11pt]
		\hline&&& \\[-9pt]
		  
      $A^{(1)}_1$ 
      
      &	
      
      \raisebox{4pt}{\begin{dynkin}
        \draw[thin,double distance=\dynkinlinesep,postaction={decorate,decoration={markings,mark=at position 0.15 with {\arrowreversed[line width=0.06cm,xshift=-1pt]{angle 60}},mark=at position 0.85 with {\arrow[line width=0.06cm,xshift=1pt]{angle 60}}}}](0,0) -- (1.6*\dynkinstep,0);
			  \dynkindot{0}{0}{1}{below}
			  \dynkinaffinedot{1.6}{0}{2}{below}
		  \end{dynkin}}
		  
      &

      &
      
      \\
    
    \hline

      $\begin{array}{c}A^{(1)}_{n-1}\\[-2pt] _{(n\ge3)}\\[-2pt] _{k\neq n}\end{array}$ 
      
      &
		
      \begin{dynkin}
  			\dynkinline{1}{0.7}{0.5}{.7}
  			\dynkinline{2}{0.7}{2.5}{.7}
  			\dynkinline{1}{-0.7}{0.5}{-0.7}
  			\dynkinline{2}{-0.7}{2.5}{-0.7}
  			\dynkinline{0}{0}{0.5}{.7}
  			\dynkinline{0}{0}{0.5}{-.7}
  			\dynkindotline{1}{.7}{2}{.7}
  			\dynkindotline{1}{-.7}{2}{-.7}
  			\dynkinline{2.5}{.7}{3}{0}
  			\dynkinline{2.5}{-.7}{3}{0}
  			\dynkindot{0}{0}{1}{left}
  			\dynkindot{0.5}{.7}{2}{above}
  			\dynkindot{0.5}{-.7}{k+1}{below}
  			\dynkindot{2.5}{.7}{k}{above}
  			\dynkindot{2.5}{-.7}{n-1}{below}
  			\dynkinaffinedot{3}{0}{n}{right}
  		\end{dynkin}

		  & 
			
      \begin{dynkin}
				\dynkinline{1}{0.7}{2.4}{0.7}
				\dynkinline{3.6}{0.7}{4}{0.7}
				\dynkindotline{2.4}{0.7}{3.6}{0.7}
				\dynkindot{1}{0.7}{1}{below}
				\dynkindot{2}{0.7}{2}{below}
				\dynkindot{4}{0.7}{k-1}{below}

				\dynkinline{1}{-0.3}{2.4}{-0.3}
				\dynkinline{3.6}{-0.3}{4}{-0.3}
				\dynkindotline{2.4}{-0.3}{3.6}{-0.3}
				\dynkindot{1}{-0.3}{k}{below}
				\dynkindot{2}{-0.3}{k+1}{below}
				\dynkindot{4}{-0.3}{n-2}{below}
			\end{dynkin}
			
      & 
			
      {\small$
      \begin{array}{l}
        \beta_j=\alpha_{j+1} 
      \end{array}
      $}

      \\

    \hline

      $\begin{array}{c}B^{(1)}_{n-1}\\[-2pt] _{(n\ge4)}\end{array}$ 
      
      &	

  		\begin{dynkin}
  			\dynkinline{0}{0}{0.5}{0}
 			\dynkinline{1.5}{0}{2}{0}
  			\dynkinline{2.5}{.7}{2}{0}
  			\dynkinline{2.5}{-.7}{2}{0}
  			\dynkindotline{0.5}{0}{1.5}{0}
  			\dynkindoubleline{0}{0}{-1.25}{0}
  			\dynkindot{2.5}{.7}{n-1}{right}
  			\dynkinaffinedot{2.5}{-.7}{n}{right}
  			\dynkindot{2}{0}{n-2}{right}
  			\dynkindot{0}{0}{2}{below}
  			\dynkindot{-1.25}{0}{1}{below}
  		\end{dynkin}
  		
      & 

			\begin{dynkin}
				\dynkinline{1}{0.7}{2.4}{0.7}
				\dynkinline{3.6}{0.7}{4}{0.7}
				\dynkindotline{2.4}{0.7}{3.6}{0.7}
				\dynkindot{1}{0.7}{1}{below}
				\dynkindot{2}{0.7}{2}{below}
				\dynkindot{4}{0.7}{n-3}{below}
				\dynkindot{2.5}{-0.3}{n-2}{below}
			\end{dynkin}
			
      & 
			
      {\small$
				\begin{array}{l}
          \beta_{n-2}=\sum_{i=1}^{n-1} \alpha_i\\
					\beta_j=\alpha_{j+1}
				\end{array}
			$}

      \\

    \hline

      $\begin{array}{c}C^{(1)}_{n-1}\\[-2pt] _{(n\ge3)}\end{array}$
        
      &

  		\begin{dynkin}
 			\dynkinline{1}{0}{1.5}{0}
			\dynkinline{2.5}{0}{3}{0}
  			\dynkindoubleline{-0.25}{0}{1}{0}
  			\dynkindotline{1.5}{0}{2.5}{0}
  			\dynkindoubleline{4.25}{0}{3}{0}
  			\dynkindot{-0.25}{0}{1}{below}
  			\dynkindot{1}{0}{2}{below}
  			\dynkindot{3}{0}{n-1}{below}
  			\dynkinaffinedot{4.25}{0}{n}{below}
  		\end{dynkin}

  		&

			\begin{dynkin}
				\dynkinline{1}{0.2}{2.4}{0.2}
				\dynkinline{3.6}{0.2}{4}{0.2}
				\dynkindotline{2.4}{0.2}{3.6}{0.2}
				\dynkindot{1}{0.2}{1}{below}
				\dynkindot{2}{0.2}{2}{below}
				\dynkindot{4}{0.2}{n-2}{below}
			\end{dynkin}
			
      & 
			
      {\small$
      \begin{array}{l}
        \beta_j=\alpha_{j+1}
      \end{array}
      $ 	}		

      \\

    \hline&&&\\[-9pt]

      $\begin{array}{c}D^{(1)}_{n-1}\\[-2pt] _{(n\ge5)}\end{array}$ 
      
      &
		
      \begin{dynkin}
			\dynkinline{0.5}{0}{1}{0}
			\dynkinline{2.5}{0}{2}{0}
  			\dynkinline{0}{.7}{0.5}{0}
  			\dynkinline{0}{-.7}{0.5}{0}
  			\dynkindotline{1}{0}{2}{0}
  			\dynkinline{2.5}{0}{3}{.7}
  			\dynkinline{2.5}{0}{3}{-.7}
  			\dynkindot{0}{.7}{1}{left}
  			\dynkindot{0}{-.7}{2}{left}
  			\dynkindot{0.5}{0}{3}{left}
  			\dynkindot{2.5}{0}{n-2}{right}
  			\dynkindot{3}{.7}{n-1}{right}
  			\dynkinaffinedot{3}{-.7}{n}{right}
  		\end{dynkin}

      &

			\begin{dynkin}
				\dynkinline{1}{1.3}{2.4}{1.3}
				\dynkinline{3.6}{1.3}{4}{1.3}
				\dynkindotline{2.4}{1.3}{3.4}{1.3}
				\dynkindot{1}{1.3}{1}{below}
				\dynkindot{2}{1.3}{2}{below}
				\dynkindot{4}{1.3}{n-4}{below}
				\dynkindot{2.5}{0.3}{n-3}{below}
				\dynkindot{2.5}{-0.7}{n-2}{below}
			\end{dynkin}
			
      & 
			
      {\small$
				\begin{array}{l}
          \beta_{n-3}=\alpha_1+\sum_{i=3}^{n-1} \alpha_i \\
					\beta_{n-2}=\alpha_2+\sum_{i=3}^{n-1} \alpha_i \\
					\beta_j=\alpha_{j+2}
				\end{array}
			$}

      \\[18pt]

    \hline

  		$E^{(1)}_6$ 
      
      &
		\raisebox{-12pt}{
  		\begin{dynkin}
  			\dynkinline{0}{0}{4}{0}
  			\dynkinline{2}{0}{2}{2}
  			\dynkindot{0}{0}{3}{below}
  			\dynkindot{1}{0}{4}{below}
  			\dynkindot{2}{0}{5}{below}
  			\dynkindot{3}{0}{6}{below}
  			\dynkinaffinedot{4}{0}{7}{below}
  			\dynkindot{2}{1}{2}{right}
  			\dynkindot{2}{2}{1}{right}
  		\end{dynkin}
		}
      & 

			\begin{dynkin}
				\dynkinline{1}{1.3}{2}{1.3}
				\dynkinline{1}{0.3}{2}{0.3}
				\dynkindot{1}{1.3}{1}{below}
				\dynkindot{2}{1.3}{2}{below}
				\dynkindot{1}{0.3}{3}{below}
				\dynkindot{2}{0.3}{4}{below}
				\dynkindot{1.5}{-0.7}{5}{below}
			\end{dynkin}

			& 
			
      {\small$
				\begin{array}{l}
					\beta_1= \alpha_{4} + \alpha_{5} \\
					\beta_2= \alpha_{1} + \alpha_{2} + \alpha_{5} + \alpha_{6} \\
					\beta_3= \alpha_{2} + \alpha_{5} \\
					\beta_4= \alpha_{3} + \alpha_{4} + \alpha_{5} + \alpha_{6} \\
					\beta_5= \alpha_{2} + \alpha_{4} + \alpha_{5} + \alpha_{6}
				\end{array}
			
			$}

		  \\

    \hline

	  	$E^{(1)}_7$
      
      &
		
      \begin{dynkin}
  			\dynkinline{0}{0}{6}{0}
  			\dynkinline{3}{0}{3}{1}
  			\dynkindot{0}{0}{2}{below}
  			\dynkindot{1}{0}{3}{below}
  			\dynkindot{2}{0}{4}{below}
  			\dynkindot{3}{0}{5}{below}
  			\dynkindot{4}{0}{6}{below}
  			\dynkindot{5}{0}{7}{below}
  			\dynkinaffinedot{6}{0}{8}{below}
  			\dynkindot{3}{1}{1}{right}
  		\end{dynkin}

      &

			\begin{dynkin}
	      \dynkinline{1}{1.3}{3}{1.3}
        \dynkindot{1}{1.3}{1}{below}
        \dynkindot{2}{1.3}{2}{below}
        \dynkindot{3}{1.3}{3}{below}
        \dynkindot{1.5}{0.3}{4}{below}
        \dynkindot{2.5}{0.3}{5}{below}
        \dynkinline{1.5}{0.3}{2.5}{0.3}
        \dynkindot{2}{-0.7}{6}{below}
			\end{dynkin}
			
      & 
			
      {\small$
				\begin{array}{l}
					\beta_1=\alpha_{4} + \alpha_{5}\\
					\beta_2=\alpha_{1} + \alpha_{5} + \alpha_{6}\\
          \beta_3=\sum_{i=2}^7 \alpha_{i} \\
          \beta_4=\sum_{i=3}^6 \alpha_{i} \\
          \beta_5=\alpha_{1} + \sum_{i=4}^7 \alpha_{i} \\
          \beta_6=\alpha_{1} + \alpha_{5} + \sum_{i=3}^7 \alpha_{i}
				\end{array}
			$}

		  \\
    
    \hline
		
      $E^{(1)}_8$
      
      &
		
      \!\!\!\begin{dynkin}
		  	\dynkinline{1}{0}{8}{0}
		  	\dynkinline{3}{0}{3}{1}
		  	\dynkindot{1}{0}{2}{below}
		  	\dynkindot{2}{0}{3}{below}
		  	\dynkindot{3}{0}{4}{below}
		  	\dynkindot{4}{0}{5}{below}
		  	\dynkindot{5}{0}{6}{below}
		  	\dynkindot{3}{1}{1}{right}
		  	\dynkindot{6}{0}{7}{below}
		  	\dynkindot{7}{0}{8}{below}
		  	\dynkinaffinedot{8}{0}{9}{below}
		  \end{dynkin}\!\!\!
		  
      &

			\begin{dynkin}
			  \dynkinline{1}{1.3}{4}{1.3}
        \dynkindot{1}{1.3}{1}{below}
        \dynkindot{2}{1.3}{2}{below}
        \dynkindot{3}{1.3}{3}{below}
        \dynkindot{4}{1.3}{4}{below}
        \dynkindot{2}{0.3}{5}{below}
        \dynkindot{3}{0.3}{6}{below}
        \dynkinline{2}{0.3}{3}{0.3}
        \dynkindot{2.5}{-0.7}{7}{below}
			\end{dynkin}
			
			& 

		{\small	$
				\begin{array}{l}
					\beta_1=\alpha_{3} + \alpha_{4} + \alpha_{5}\\
          \beta_2=\alpha_{1} + \sum_{i=4}^6 \alpha_{i} \\
          \beta_3=\sum_{i=2}^7 \alpha_{i} \\
          \beta_4=\alpha_{1} + \sum_{i=3}^8 \alpha_{i} \\
          \beta_5=\alpha_{1} + \alpha_{4} + \sum_{i=3}^7 \alpha_{i} \\
          \beta_6=\alpha_{4} + \alpha_{5} + \sum_{i=1}^8 \alpha_{i} \\
          \beta_7=\alpha_{4} + \sum_{i=3}^6 \alpha_{i} + \sum_{i=1}^8 \alpha_{i}\!\!\\
				\end{array}
			$}

		  \\
    
    \hline
		
      $F^{(1)}_4$
      
      &
		
      \begin{dynkin}
  			\dynkinline{4}{0}{2}{0}
  			\dynkindoubleline{2}{0}{0.75}{0}
  			\dynkinline{0.75}{0}{-0.250}{0}
  			\dynkinaffinedot{4}{0}{5}{below}
  			\dynkindot{3}{0}{4}{below}
  			\dynkindot{2}{0}{3}{below}
  			\dynkindot{0.75}{0}{2}{below}
  			\dynkindot{-0.25}{0}{1}{below}
  		\end{dynkin}
  		
      & 

			\begin{dynkin}
				\dynkinline{1}{0.7}{2}{0.7}
				\dynkindot{1}{0.7}{1}{below}
				\dynkindot{2}{0.7}{2}{below}
				\dynkindot{1.5}{-0.3}{3}{below}
			\end{dynkin}
			
      &
			
      {\small$
				\begin{array}{l}
					\beta_1=\alpha_2+\alpha_3 \\
					\beta_2=\alpha_1+\alpha_2+\alpha_3+\alpha_4 \\
					\beta_3=2\alpha_2+\alpha_3+\alpha_4
				\end{array}
			$}

      \\

    \hline

		  $G^{(1)}_2$
      
      &	
		
      \begin{dynkin}
  			\dynkinline{2}{0}{1}{0}
  			\dynkintripleline{1}{0}{-0.25}{0}
  			\dynkinaffinedot{2}{0}{3}{below}
  			\dynkindot{1}{0}{2}{below}
  			\dynkindot{-0.25}{0}{1}{below}
  		\end{dynkin}

  		& 

			\begin{dynkin}
				\dynkindot{1}{0.3}{1}{below}
			\end{dynkin}

			& 
			
      {\small$
      \begin{array}{l}
        \beta_1=\alpha_1+\alpha_2
      \end{array}
      $ }			

      \\

    \hline
  \end{tabular}
  
 \bigskip
  
\caption{Standard affine root systems and their finite orbits}
\label{tab:type-by-type}
\end{table}
\cref{tab:type-by-type} gives the data necessary to describe finite orbits for the standard affine root systems and a particular choice of Coxeter element.
(These are the root systems that are obtained from finite root systems by the usual construction.  See \cref{sec:root_systems}.)
We name the types as in \cite[\S4.8]{Kac90}.
The choice of Coxeter element $c=s_1\cdots s_n$ is given by the labeling of nodes in the second column. 
In every case $\alpha_\aff$ is $\alpha_n$ so that $c=c_\laff s_\aff$, and we draw the affine node in a different color.
We also label the nodes in the diagram of $\RSTfin{c}$ according to \cref{Up details}, i.e.\ so that $c_\laff t_\theta=s_1\cdots s_{n-1}t_\theta=t_{\beta_1}\cdots t_{\beta_{n-2}}$.  
In each case, the integer $\kappa(\beta)$ is $1$ for all $\beta\in\TravReg{c}$. 
\end{example}

\begin{remark}
Details about $c$-orbits and $\RSTfin{c}$ in nonstandard affine root systems can be obtained from the information in \cref{tab:type-by-type} by rescaling.
See \cref{sec:root_systems}.
Furthermore, every Coxeter element in an affine Weyl group can be obtained from a Coxeter element described in \cref{tab:type-by-type} via source-sink moves (\cref{aff conj}).
Each source-sink move conjugates the Coxeter element by a simple reflection.
Thus, complete details on orbits of any affine root system under the action of any Coxeter element can be recovered from the details in \cref{tab:type-by-type} by conjugation and rescaling.
\end{remark}

\begin{remark}
The sets $\TravProj{c}$ and $\TravInj{c}$ depend only on~$c$, not on a choice of reduced word for $c$.
Each contains $n$ distinct positive roots.
\cref{aff c-orbits}, $\TravInf{c}$ implies that $\TravProj{c}$ and $\TravInj{c}$ are disjoint when $\RS$ is affine.
\end{remark}



%

\begin{remark}\label{tUp remark}
The set $\RST{c}$ is, in a certain sense, a ``root subsystem'' of $\RS$, but illustrates the strange rank behavior that can occur when one considers the intersections of infinite root systems with subspaces.
(See \cite[Remark~2.13]{typefree}.)
The set $\RST{c}$ can be obtained from $\RSTfin{c}$ by following the usual affinization procedure that adjoins~$\delta$.
However, this procedure, when applied to reducible finite root systems, gives the same result as extending each irreducible component using a new vector $\delta_i$ and then identifying all of the $\delta_i$ with~$\delta$.
The difference in rank between the original root system and the extended root system is (in effect) the number of irreducible components, even though only one extra dimension has been added.
Looking backward to \cref{tab:type-by-type}, we see that the rank of $\RST{c}$ can vary from $n-2$ to $n+1$ because $\RSTfin{c}$ has between $0$ and $3$ connected components.
\end{remark}

\begin{remark}
We will see in \cref{c on tUp components} that the sizes of finite orbits of real roots are the ranks of irreducible components of $\RST{c}$, which are one greater than the ranks of components of $\RSTfin{c}$.
\cref{tab:type-by-type} reveals that $\RSTfin{c}$ has at most $3$ irreducible components, so there are at most three different sizes of finite orbits of real roots.
\end{remark}

\begin{remark}\label{DR and us}
The difference between \cref{aff c-orbits} and the Dlab-Ringel result (as it can be pieced together from \cite[Proposition~1.9]{Dlab76} and the preceding and following remarks) is in \cref{aff c-orbits}\eqref{finite transversal}.
In place of $\kappa(\beta)$, Dlab-Ringel uses the \newword{tier number}~$r$ (the parenthesized superscript in the notation of \cite[Chapter~4]{Kac90}).
In every case, $\kappa(\beta)\le r$, but this inequality may be strict. 
When $\kappa(\beta)<r$, the Dlab-Ringel formulation must replace $\TravReg{c}$ by a larger set.
When $r=1$ (the standard types discussed in \cref{ex tab}), we must have $\kappa(\beta)=r=1$.
One can check that $\kappa(\beta)=r$ in types $D_n^{(2)}$ and $D_4^{(3)}$ and that $\kappa(\beta)=1<r$ in type $A_{2n-2}^{(2)}$.
The remaining types are more complicated:  
In type $A_{2n-3}^{(2)}$, we have $\kappa(\beta)=2$ for $\beta$ in the rank-$1$ component of $\RSTfin{c}$ and $\kappa(\beta)=1$ in the other component.
In type $E_6^{(2)}$, $\beta(\kappa)=1$ in the rank-$1$ component and $\beta(\kappa)=2$ in the rank-$2$ component.
\end{remark}

\begin{remark}
Several of the ideas of this paper can also be found in \cite{McCSul}.
Some translation is necessary, since \cite{McCSul} works with affine Coxeter groups as groups of Euclidean motions rather than as Weyl groups of affine Kac-Moody root systems.
For example, $\RSTfin{c}$ appears in \cite{McCSul} as the ``horizontal root system.''
See \cite[Definition~6.1]{McCSul} and compare the third column of \cref{tab:type-by-type} with \cite[Table~1]{McCSul}.
\end{remark}

\section{Root systems and Coxeter groups}\label{sec:root_systems}
A \newword{symmetrizable (generalized) Cartan matrix} is a square integer matrix $A=[a_{ij}]_{1\le i,j\le n}$ \nomenclature[a]{$A$}{$(a_{ij})$, Cartan matrix} with diagonal entries $2$ and nonpositive off-diagonal entries, such that there exist positive real numbers $d_i$ \nomenclature[di]{$d_i$}{(skew-)symmetrizing factors} with $d_i a_{ij}=d_j a_{ji}$ for all $i,j$.

Let $V$ \nomenclature[v]{$V$}{ambient vector space} be a complex vector space with basis $\Simples=\set{\alpha_1,\ldots,\alpha_n}$.\nomenclature[zzq]{$\Simples$}{simple roots}
We only care about the real span of $\Simples$, except when we consider eigenvectors, so it is safe to think of $V$ as a real vector space, passing to the complexification when necessary.
The element of $\Simples$ are the \newword{simple roots}\nomenclature[zza1]{$\alpha_i$}{simple root}.
Define the \newword{simple co-roots} to be $\alpha_i\ck= d_i^{-1} \alpha_i$.  

Let $K$ be the symmetric bilinear form on $V$ defined by ${K(\alpha\ck_i, \alpha_j)=a_{ij}}$.
\nomenclature[k]{$K$}{symmetric bilinear form on $V$}
For each $i=1,\ldots,n$, the \newword{simple reflection} $s_i$ \nomenclature[si]{$s_i$}{simple reflection} is the linear map given on the basis of simple roots by $s_i(\alpha_j)=\alpha_j-K(\alpha\ck_i, \alpha_j) \alpha_i$.
On the basis of simple co-roots $s_i$ acts as $s_i(\alpha\ck_j)=\alpha\ck_j-K(\alpha\ck_j, \alpha_i) \alpha\ck_i$.
The group $W$ generated by $S=\set{s_i:i=1,\ldots,n}$ is called the \newword{Weyl group}.
Each element of $W$ is a symmetry of $K$.

The \newword{real root system} $\RSre$ is the set of vectors $w\alpha_i$ for $w\in W$ and $i=1,\ldots,n$ (called \newword{real roots}).
There is a larger set $\RS\supseteq\RSre$, \nomenclature[zzv1]{$\RS$}{root system} called the \newword{root system}, which is strictly larger than $\RSre$ if and only if $\RSre$ is infinite.
We describe the \newword{imaginary roots} (the elements of $\RS\setminus\RSre$) below in the case where $A$ is of affine type.

Each root in $\RS$ is either \newword{positive} (in the nonnegative linear span of $\Simples$) or \newword{negative} (in the nonpositive linear span of $\Simples$).
Each real root $\beta$ has an associated co-root $\beta\ck=\frac{2}{K(\beta,\beta)}\beta$ and defines a \newword{reflection} $t_\beta$ on $V$ given by $t_\beta x=x-K(\beta\ck, x) \beta$ for every $x\in V$.
Every reflection in $W$ is $t_\beta$ for a unique positive real root~$\beta$. 
The notation $[\beta:\alpha_i]$ \nomenclature[zzzz]{$[\beta:\alpha_i]$}{$i$-th coefficient of $\beta$ in the basis $\Simples$} means the $\alpha_i$-coefficient of $\beta$ in the basis of simple roots.

A Cartan matrix $A$ is called \newword{reducible} if it can be reindexed to have a nontrivial block-diagonal decomposition, in which case each diagonal block is a Cartan matrix (a component of $A$).
Otherwise, it is \newword{irreducible}.
The root system $\RS$ and Weyl group $W$ associated to $A$ are accordingly reducible or irreducible.

A \newword{Coxeter element} $c$ \nomenclature[c]{$c$}{Coxeter element $s_1\cdots s_n$} is the product of any permutation of $S$.
It is possible for different permutations of $S$ to have the same product $c$.
An element $s\in S$ is \newword{initial in $c$} if there exists a permutation of $S$ whose product is $c$ and whose first entry is $s$.
Similarly, $s$ is \newword{final in $c$} if $c$ is the product of a permutation ending in $s$.
When $s$ is initial or final in $c$, the element $scs$ is also a Coxeter element for $W$.
The operation of passing from $c$ to $scs$ is called a \newword{source-sink move}.
As in the introduction, we assume that $A$ has been indexed so that $c=s_1\dots s_n$.

We use $c$ to define a skew-symmetric bilinear form on $V$ by
\begin{equation}\label{omega def}
\omega_c(\alpha_i\ck,\alpha_j)=
\left\lbrace\begin{array}{ll}
a_{ij}&\text{if }i>j,\\
0&\text{if }i=j,\text{ or}\\
-a_{ij}&i<j.
\end{array}\right.
\end{equation}
The following lemma is \cite[Lemma~3.8]{typefree}.

\begin{lemma}\label{omega invariant} 
If~$s$ is initial or final in $c$, then $\omega_c(\alpha,\beta)=\omega_{scs}(s\alpha,s\beta)$ for all $\alpha$ and~$\beta$ in $V$.
\end{lemma}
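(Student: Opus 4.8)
The plan is to reduce the claimed identity to a finite check on a basis and then dispatch that check with a short, sign-tracking case analysis driven by how the source-sink move reorders the indices. First I would record that both $\omega_c(\cdot,\cdot)$ and the map $(u,v)\mapsto\omega_{scs}(su,sv)$ are bilinear forms on $V$, so it suffices to prove the identity on a basis in each slot. Because $\set{\alpha_i\ck}$ and $\set{\alpha_j}$ are each bases of $V$, it is enough to verify
\[
\omega_c(\alpha_i\ck,\alpha_j)=\omega_{scs}(s\,\alpha_i\ck,\,s\,\alpha_j)
\qquad(1\le i,j\le n).
\]
Since $s$ is initial in $c$, I may choose a reduced word for $c$ beginning with $s$ and index so that $s=s_1$ and $c=s_1\cdots s_n$; this does not change $\omega_c$, because any two reduced words for $c$ differ by transpositions of adjacent \emph{commuting} generators $s_i,s_j$ (those with $a_{ij}=0$), and such a transposition alters no value in \cref{omega def}. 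With this normalization $scs=s_2\cdots s_ns_1$, so the index order governing $\omega_{scs}$ is $2<3<\cdots<n<1$: it agrees with the order for $\omega_c$ on every pair not involving the index $1$, and reverses exactly the comparisons of $1$ against every other index.

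Next I would expand, using $s_1\alpha_j=\alpha_j-a_{1j}\alpha_1$ and $s_1\alpha_i\ck=\alpha_i\ck-a_{i1}\alpha_1\ck$ together with $\omega_{scs}(\alpha_1\ck,\alpha_1)=0$, and split into cases. For $i,j\neq1$ the expansion produces $\omega_{scs}(\alpha_i\ck,\alpha_j)$ plus the cross terms $-a_{1j}\,\omega_{scs}(\alpha_i\ck,\alpha_1)-a_{i1}\,\omega_{scs}(\alpha_1\ck,\alpha_j)$; since $1$ is the largest index in the $scs$-order these equal $-a_{1j}(-a_{i1})-a_{i1}(a_{1j})=0$, leaving $\omega_{scs}(\alpha_i\ck,\alpha_j)=\omega_c(\alpha_i\ck,\alpha_j)$, as the two orders agree away from the index $1$. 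The boundary cases $i=1$ or $j=1$ use $s_1\alpha_1\ck=-\alpha_1\ck$ and $s_1\alpha_1=-\alpha_1$: there the reflection contributes a sign and the single reversed order-comparison contributes a matching sign, and the two cancel to give the identity (the case $i=j=1$ being immediate, since both sides vanish). Notably, only commutativity of scalars is used here; the symmetrizability relation $d_ia_{ij}=d_ja_{ji}$ enters only in the separate check that $\omega_c$ is genuinely skew-symmetric, not in this transformation law.

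Finally, the case that $s$ is \emph{final} in $c$ I would reduce to the initial case: $s$ is initial in $c^{-1}$, reversing the defining order shows $\omega_{c^{-1}}=-\omega_c$, and $sc^{-1}s=(scs)^{-1}$, so applying the already-proved identity to $c^{-1}$ and negating yields $\omega_{scs}(su,sv)=\omega_c(u,v)$. The main obstacle is purely bookkeeping: one must respect the asymmetric ``coroot-in-the-first-slot'' convention of \cref{omega def} and correctly track that a source-sink move flips precisely the comparisons between the moved index and all others while fixing every remaining comparison. I expect the boundary cases to be the most error-prone, since there a nontrivial reflection sign and a reversed order-comparison both appear and must be shown to cancel.
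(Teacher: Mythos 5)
Your proof is correct. Note that the paper does not actually prove this lemma---it is quoted as \cite[Lemma~3.8]{typefree}---so your self-contained verification is a genuine addition rather than a variant of an argument in the text. The structure is sound: bilinearity reduces the claim to the basis pairs $(\alpha_i\ck,\alpha_j)$; the normalization $s=s_1$ is legitimate because any two expressions of $c$ as a permutation of $S$ are linear extensions of the same heap and hence connected by transpositions of adjacent commuting letters, each of which leaves every value in \cref{omega def} unchanged (the relevant $a_{ij}$ vanish); the cross terms $-a_{1j}\,\omega_{scs}(\alpha_i\ck,\alpha_1)-a_{i1}\,\omega_{scs}(\alpha_1\ck,\alpha_j)$ indeed evaluate to $a_{1j}a_{i1}-a_{i1}a_{1j}=0$ because the index $1$ is maximal in the $scs$-order; and in the boundary cases the sign from $s_1\alpha_1=-\alpha_1$ (resp.\ $s_1\alpha_1\ck=-\alpha_1\ck$) exactly compensates the single reversed comparison. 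The reduction of the ``final'' case to the ``initial'' case via $\omega_{c^{-1}}=-\omega_c$ and $sc^{-1}s=(scs)^{-1}$ is also correct. Your closing observation is apt: symmetrizability plays no role in this transformation law, only in the skew-symmetry of $\omega_c$ itself.
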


If $w\in W$ is the product $r_1\cdots r_k$ with each $r_i\in S$, then the expression $r_1\cdots r_k$ is called \newword{reduced} if every other expression $w=p_1\cdots p_\ell$ with each $p_i\in S$ has $\ell\ge k$.
An expression $r_1\cdots r_k$ is reduced if and only if its \newword{left reflections} $t_i=r_1\cdots r_i\cdots r_1$ are all distinct.
When $r_1\cdots r_k$ is reduced, the set of left reflections depends only on $w$ and is called the set of \newword{inversions} of $w$.
Furthermore, writing $\gamma_i\in\Simples$ for the simple root associated to $r_i$, the expression $r_1\cdots r_k$ is reduced if and only if the roots $r_1\cdots r_{i-1}\gamma_i$ are distinct and are all positive.
When $r_1\cdots r_k$ is reduced, these are exactly the positive roots associated to the inversions of $w$.

The following result is due to \cite{Kleiner-Pelley} in broad generality and to \cite{Speyer} in full generality.
(See also \cite{Fomin07,KMOTU}.)
Write $(s_1\cdots s_n)^k$ for the $k$-fold concatenation of $s_1\cdots s_n$ with itself.

\begin{theorem}\label{Speyer}
If $W$ is infinite and irreducible, then the word $(s_1\cdots s_n)^k$ is reduced for all $k\ge1$.
\end{theorem}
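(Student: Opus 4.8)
The plan is to prove the contrapositive: if the word $(s_1\cdots s_n)^k$ fails to be reduced for some $k$, then $W$ is finite. I would work entirely with \emph{contiguous} subwords. The first observation is that a contiguous subword of $(s_1\cdots s_n)^k$ in which every letter is distinct is automatically reduced, since a product of distinct simple reflections is a reduced word for a Coxeter element of the parabolic subgroup they generate (the roots $s_{b_1}\cdots s_{b_{i-1}}\alpha_{b_i}$ keep $\alpha_{b_i}$-coefficient $1$, so they are positive and distinct). Because the letters of $(s_1\cdots s_n)^k$ are periodic of period $n$, two occurrences of the same letter are separated by at least $n$ positions. Hence any contiguous subword that is \emph{not} reduced must repeat some letter, therefore occupy more than $n$ consecutive positions, therefore meet every residue modulo $n$; in other words its set of letters is all of $S$.

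Next I would pass to a minimal witness: among all non-reduced contiguous subwords of $(s_1\cdots s_n)^k$ choose one of least length, say $u=s_{a_1}\cdots s_{a_p}$. Minimality guarantees that every proper contiguous subword of $u$ is reduced; in particular $s_{a_1}\cdots s_{a_{p-1}}$ and $s_{a_2}\cdots s_{a_p}$ are reduced while $u$ is not. By the previous paragraph the support of $u$ is all of $S$, and since $W$ is irreducible this support is a connected diagram. The crux is then a lemma, essentially due to Speyer: a minimal non-reduced word whose support generates an irreducible Coxeter group generates a \emph{finite} one. Granting it, $W=\langle S\rangle$ is finite, which is exactly the conclusion of the contrapositive. (The hypothesis of irreducibility is genuinely needed here: for a reducible group such as $\langle s\rangle\times\langle t,u\rangle$ with $\langle t,u\rangle$ infinite, the word $stus$ is a minimal non-reduced word whose support is all generators, yet the group is infinite; what fails is precisely the connectivity of the support.)

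The main obstacle is the finiteness lemma, and the exchange condition is the tool. Setting $z=s_{a_2}\cdots s_{a_{p-1}}$, we know $s_{a_1}z$ and $zs_{a_p}$ are reduced of length $p-1$ while $u=(s_{a_1}z)\,s_{a_p}$ is not. Reducedness of $zs_{a_p}$ forces $z\alpha_{a_p}>0$, while non-reducedness of $u$ forces $s_{a_1}(z\alpha_{a_p})<0$; since the only positive root sent to a negative root by $s_{a_1}$ is $\alpha_{a_1}$, we get $z\alpha_{a_p}=\alpha_{a_1}$, equivalently $s_{a_1}z=zs_{a_p}=:v$ and $v\alpha_{a_p}=-\alpha_{a_1}$. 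The heart of the matter is to upgrade this single relation to the statement that $v$ sends \emph{every} simple root in the support to a negative root, so that $v$ is a longest element of the parabolic subgroup on that support; a Coxeter group possessing a longest element is finite. I expect this upgrade---propagating $v\alpha_{a_p}=-\alpha_{a_1}$ across the connected diagram, where irreducibility is essential---to be the delicate step, to be carried out by Speyer's direct combinatorial induction or, alternatively, through the geometry of the Tits cone.
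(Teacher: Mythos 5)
First, a point of comparison: the paper does not prove \cref{Speyer} at all---it quotes the result, with attributions to Kleiner--Pelley and Speyer---so your proposal can only be judged on its own terms. Its first half is correct and does match the opening of Speyer's argument: a contiguous factor of $(s_1\cdots s_n)^k$ of length at most $n$ has distinct letters and is therefore reduced, so a minimal non-reduced contiguous factor $u=s_{a_1}\cdots s_{a_p}$ has $p\ge n+1$ and full support; and the exchange-condition computation giving $z\alpha_{a_p}=\alpha_{a_1}$ and hence $v\alpha_{a_p}=-\alpha_{a_1}$ for $v=s_{a_1}\cdots s_{a_{p-1}}$ is sound.

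The gap is the ``finiteness lemma,'' which you defer to the end and which is false in the form you state it (``a minimal non-reduced word whose support generates an irreducible Coxeter group generates a finite one''), as is the intended upgrade from the single relation $v\alpha_{a_p}=-\alpha_{a_1}$. Take $W$ of type $A^{(1)}_2$, with $S=\set{s_1,s_2,s_3}$ and all $m_{ij}=3$, and consider the word $s_1s_2s_1s_3s_2s_3$. Computing the left-to-right roots shows that $s_1s_2s_1s_3s_2$ and $s_2s_1s_3s_2s_3$ are both reduced (their root sequences are $\alpha_1,\ \alpha_1+\alpha_2,\ \alpha_2,\ 2\alpha_1+2\alpha_2+\alpha_3,\ \alpha_1+2\alpha_2+\alpha_3$ and $\alpha_2,\ \alpha_1+\alpha_2,\ \alpha_1+2\alpha_2+\alpha_3,\ 2\alpha_1+2\alpha_2+\alpha_3,\ \alpha_1$, all positive and distinct), while $s_1s_2s_1s_3s_2\,\alpha_3=-\alpha_1$, so the six-letter word is not reduced. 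Every proper contiguous subword sits inside one of the two reduced length-five factors, so this is a minimal non-reduced word; its support is all of $S$ and the diagram is connected, yet $W$ is infinite. Moreover $v=s_1s_2s_1s_3s_2$ has full support and satisfies $v\alpha_3=-\alpha_1$, so no amount of ``propagating across the connected diagram'' can show that such a $v$ is a longest element. What saves the actual theorem is structure your reduction throws away: in a factor of $(s_1\cdots s_n)^k$ each generator occurs exactly once in every window of $n$ consecutive letters (false for $s_1s_2s_1s_3s_2s_3$), and Speyer's finiteness criterion requires a descent condition at \emph{every} $s\in S$, distributed over a length-additive factorization, not just at the two endpoints. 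So the missing step is not merely delicate: the intermediate statement you propose to prove there is wrong, and the argument must be reorganized so that the periodicity of the Coxeter word is carried into the final lemma.
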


The Weyl group $W$ is finite if and only if $K$ is positive definite.
We gather some results about the finite case.
The next proposition follows from \cite[Lemma~3.16]{Humphreys}.
\begin{proposition}\label{cox fix}
If $W$ is finite and $c$ is a Coxeter element of $W$, then the fixed space of the action of $c$ on $V$ is $\set{0}$.
\end{proposition}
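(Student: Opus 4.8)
The plan is to show directly that any $c$-fixed vector must lie in the radical of the form $K$, and then to invoke positive definiteness. Concretely, I would start from a fixed vector $x$, so $cx=x$ with $c=s_1\cdots s_n$, and peel the simple reflections off one at a time from the right, recording at each stage how the running vector moves.

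First, I would introduce the partial images $x^{(n)}=x$ and $x^{(j-1)}=s_j x^{(j)}$ for $j=n,n-1,\dots,1$, so that $x^{(0)}=s_1\cdots s_n x=cx=x$. Since each simple reflection acts by $s_j y = y - K(\alpha_j\ck, y)\alpha_j$, the successive differences satisfy $x^{(j-1)}-x^{(j)}=\mu_j\alpha_j$ with $\mu_j=-K(\alpha_j\ck,x^{(j)})$; in other words, applying $s_j$ displaces the running vector only in the direction of the single simple root $\alpha_j$.

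The key step is a telescoping identity: summing $x^{(j-1)}-x^{(j)}$ over $j=1,\dots,n$ collapses to $x^{(0)}-x^{(n)}=x-x=0$, so $\sum_{j=1}^n \mu_j\alpha_j=0$. Because $c$ is a Coxeter element, each simple reflection occurs exactly once, so the roots appearing are precisely the basis $\Simples=\set{\alpha_1,\dots,\alpha_n}$, and their linear independence forces every $\mu_j=0$. Hence $x^{(j-1)}=x^{(j)}$ for all $j$, so all the partial images coincide with $x$, and each relation $x^{(j-1)}=s_j x^{(j)}$ degenerates to $s_j x = x$. In particular $K(\alpha_j\ck,x)=0$, and since $\alpha_j\ck=d_j^{-1}\alpha_j$ with $d_j>0$, also $K(\alpha_j,x)=0$, for every $j$.

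Finally, as the $\alpha_j$ span $V$, the vector $x$ lies in the radical of $K$. Because $W$ is finite, $K$ is positive definite and hence nondegenerate, so the radical is trivial and $x=0$; this yields $\Fix(c)=\set{0}$. I expect the only real subtlety to be the bookkeeping: stripping the reflections in the order matching $c=s_1\cdots s_n$ so that the displacement at step $j$ is genuinely a multiple of $\alpha_j$ alone. It is worth stressing that positive definiteness is precisely the hypothesis that rules out a nonzero fixed vector; in the degenerate affine case the identical computation would instead only place $x$ in the radical of $K$, which is exactly where the fixed imaginary root $\delta$ lives.
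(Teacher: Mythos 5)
Your argument is correct and complete: the telescoping identity $\sum_j \mu_j\alpha_j = x^{(0)}-x^{(n)}=0$ together with the linear independence of $\Simples$ forces each $\mu_j=0$, a backward induction then gives $K(\alpha_j\ck,x)=0$ for all $j$, and positive definiteness of $K$ (which the paper records as equivalent to finiteness of $W$) finishes the proof. The paper itself gives no proof, citing only \cite[Lemma~3.16]{Humphreys}; your peeling-off-reflections argument is essentially the classical one underlying that reference, so there is nothing to correct.
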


The following is a result of Brady and Watt \cite{BWKPi,BWOrth}.
\begin{proposition}\label{reduced T}
Suppose $W$ is a \emph{finite} Coxeter group, $w$ is an element of $W$ and $t_1\cdots t_k$ is an expression for $w$ as a product of reflections, minimizing $k$ among all such expressions.
Then $\set{\gamma_1,\ldots,\gamma_k}$ is a basis for the orthogonal complement of the fixed space of the action of $w$ on $V$, where $\gamma_i$ is the positive root such that~$t_i=t_{\gamma_i}$.
\end{proposition}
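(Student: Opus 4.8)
The plan is to work inside the orthogonal group of $(V,K)$ and to reduce the entire statement to the single quantitative fact that the minimal number $k$ of reflections equals the codimension of the fixed space. Since $W$ is finite, $K$ is positive definite, so $W$ acts by $K$-orthogonal transformations. For a real root $\beta$ the reflection $t_\beta$ negates $\beta$ and fixes $\beta^\perp$, so its \emph{moved space} $\operatorname{im}(t_\beta-1)$ is exactly $\Span(\beta)$; more generally, for orthogonal $w$ the moved space $M(w):=\operatorname{im}(w-1)$ is the orthogonal complement of $\Fix(w)$, whence $\dim M(w)=\operatorname{codim}\Fix(w)$. The first, easy, step is the inclusion $M(w)\subseteq\Span(\gamma_1,\dots,\gamma_k)$: any vector orthogonal to every $\gamma_i$ is fixed by every $t_{\gamma_i}$ and hence by $w$, so $\Span(\gamma_1,\dots,\gamma_k)^\perp\subseteq\Fix(w)$, and taking orthogonal complements gives the claim. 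In particular $\dim M(w)\le k$.

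Granting this, the whole proposition reduces to the identity $\dim M(w)=k$: if $\dim M(w)=k$, then $M(w)$ is a $k$-dimensional space contained in the span of the $k$ vectors $\gamma_1,\dots,\gamma_k$, which forces those vectors to be linearly independent and to span $M(w)=\Fix(w)^\perp$, exactly the assertion. So the plan is to prove the reverse inequality $k\le\dim M(w)$, which I would establish by peeling off one reflection at a time, controlled by the following reduction lemma: \emph{if $\beta$ is a root lying in $M(w)$, then $\dim M(t_\beta w)=\dim M(w)-1$.} To prove the lemma, note that $w$ preserves $M(w)$ and that $w-1$ restricted to $M(w)$ is invertible, because $\Fix(w)\cap M(w)=\Fix(w)\cap\Fix(w)^\perp=0$; choose $v_0\in M(w)$ with $(w-1)v_0=\beta$. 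Orthogonality of $w$ yields $0=K(wv_0,wv_0)-K(v_0,v_0)=2K(v_0,\beta)+K(\beta,\beta)$, and this is precisely the relation making $t_\beta(v_0+\beta)=v_0$, i.e.\ $t_\beta w\,v_0=v_0$. Thus $t_\beta w$ fixes $v_0\notin\Fix(w)$ while still fixing $\Fix(w)$ pointwise, so $\dim M(t_\beta w)\le\dim M(w)-1$; the matching lower bound comes from subadditivity of moved spaces, namely $\dim M(uv)\le\dim M(u)+\dim M(v)$ (read off from $uv-1=u(v-1)+(u-1)$ together with invertibility of $u$), applied to $w=t_\beta\,(t_\beta w)$.

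With the lemma in hand I would run the descent: starting from $w$, repeatedly apply it to drop $\dim M(\cdot)$ by one until reaching the identity, which takes exactly $d:=\dim M(w)$ steps and exhibits $w$ as a product of $d$ reflections, giving $k\le d$ as required. For this I must guarantee at each stage that the current element $u\neq1$ admits a \emph{root} $\beta\in M(u)=\Fix(u)^\perp$. Here Steinberg's theorem that the pointwise stabilizer of a subspace is generated by the reflections it contains applies: $u$ fixes $\Fix(u)$ pointwise, so some reflection occurring in it has the form $t_\beta$ with $\beta\perp\Fix(u)$, i.e.\ $\beta\in M(u)$, and (as recorded above) this $\beta$ is a genuine root. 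Because $\Fix(t_\beta u)\supseteq\Fix(u)$ forces $M(t_\beta u)\subseteq M(u)$, the descent stays within nested moved spaces and can be iterated.

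I expect the main obstacle to be exactly this last point. The orthogonality computation that lowers the moved dimension is routine once a vector $\beta\in M(w)$ is available, and over the full orthogonal group $O(V)$ \emph{any} nonzero $\beta\in M(w)$ works---this is the Brady--Watt perspective. The genuine difficulty in the reflection-group setting is that $\beta$ must be a root so that $t_\beta\in W$; ensuring such a root lies in $\Fix(w)^\perp$ is where Steinberg's fixed-space theorem does the real work, and it is the step I would be most careful to justify.
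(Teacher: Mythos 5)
Your argument is correct, but it is a genuinely different route from the paper's: the paper does not prove this proposition at all, but obtains it by citing two known results --- the Brady--Watt identification of $\Fix(w)$ with $\bigcap_{i=1}^k\gamma_i^\perp$, and Carter's Lemma giving linear independence of $\set{\gamma_1,\ldots,\gamma_k}$ --- and simply observing that together they yield the basis statement. You instead reconstruct both ingredients from scratch. Your reduction to the single identity $\dim\operatorname{im}(w-1)=k$ is clean: the easy inclusion $\operatorname{im}(w-1)\subseteq\Span(\gamma_1,\ldots,\gamma_k)$ gives one inequality, and your descent lemma (a root $\beta\in\operatorname{im}(w-1)$ yields $\dim\operatorname{im}(t_\beta w-1)=\dim\operatorname{im}(w-1)-1$, with the lower bound from subadditivity of moved spaces) gives the other; linear independence then falls out of the dimension count, so Carter's Lemma is a byproduct rather than an input. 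The orthogonality computation $2K(v_0,\beta)+K(\beta,\beta)=0$ forcing $t_\beta w\,v_0=v_0$ is exactly the Brady--Watt mechanism, and you correctly isolate the one point where the reflection-group structure (as opposed to the full orthogonal group) matters: producing a \emph{root} in $\Fix(u)^\perp$ for each nonidentity $u$ in the descent, which you settle by Steinberg's theorem that the pointwise stabilizer of $\Fix(u)$ is generated by the reflections it contains. What the paper's citation approach buys is brevity; what yours buys is a self-contained proof whose only external input is Steinberg's fixed-space theorem, at the cost of essentially reproving the cited results.
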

\cref{reduced T} can be obtained by combining \cite[Theorem~2.2(i)]{rotate} (another restatement of results of \cite{BWKPi,BWOrth}), which says that the fixed space of $w$ is $\bigcap_{i=1}^k \gamma_i^\perp$, with Carter's Lemma, which states that $\set{\gamma_1,\ldots,\gamma_k}$ is linearly independent. 
(See \cite{Carter} and also \cite[Lemma~2.4.5]{Armstrong}.)
The orthogonal complement the proposition refers to is with respect to the Euclidean form preserved by $W$.

A \newword{(standard) parabolic subgroup} of a Coxeter group $W$ is a subgroup generated by some subset of $S$.
The following fact is one direction of \cite[Lemma~1.4.3]{Bessis}.
\begin{lemma}\label{bessis lemma}
Suppose $c$ is a Coxeter element in a \emph{finite} Coxeter group $W$ of rank~$n$. 
If $t_1\cdots t_n$ is an expression for $c$ as a product of reflections in $W$, then for each $i$, there exists a standard parabolic subgroup $W'$ of $W$ of rank $i$ such that $t_1\cdots t_i$ is conjugate in $W$ to a Coxeter element of $W'$.
\end{lemma}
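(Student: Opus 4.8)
The plan is to prove the equivalent statement that each prefix $w_i:=t_1\cdots t_i$ is a Coxeter element of the parabolic subgroup $P_i$ obtained as the pointwise stabilizer of $\Fix(w_i)$. This suffices: by Steinberg's theorem such a stabilizer is a parabolic subgroup of $W$ conjugate to a standard one, and a conjugating element $g$ acts as an isometry carrying the root system of $P_i$ onto that of the standard parabolic $W'$, hence Coxeter elements to Coxeter elements, so $g w_i g^{-1}$ is a Coxeter element of $W'$. First I record the bookkeeping. Since $W$ is finite and $c$ is a Coxeter element, \cref{cox fix} gives $\Fix(c)=\set{0}$, so by \cref{reduced T} the reflection length of $c$ equals $\operatorname{codim}\Fix(c)=n$ and the given length-$n$ expression $t_1\cdots t_n$ is automatically reduced. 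Writing $\gamma_j$ for the positive root with $t_j=t_{\gamma_j}$, Carter's lemma makes $\gamma_1,\dots,\gamma_n$ linearly independent; hence $\gamma_1,\dots,\gamma_i$ are independent, $w_i$ has reflection length $i$, and $\Fix(w_i)^\perp=\Span\set{\gamma_1,\dots,\gamma_i}$ by \cref{reduced T}. The root system of $P_i$ is therefore $\RS\cap\Span\set{\gamma_1,\dots,\gamma_i}$, which spans an $i$-dimensional space, so $P_i$ has rank $i$; moreover each $t_j$ with $j\le i$ has root $\gamma_j\perp\Fix(w_i)$, so $t_j\in P_i$ and thus $w_i\in P_i$ with reflection length $i$ in $P_i$.

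Next I reduce to the codimension-one case by downward induction on $i$. For $i=n$ we have $w_n=c$ and $P_n=W$, so there is nothing to prove. Assume $w_{i+1}$ is a Coxeter element of the rank-$(i+1)$ parabolic $P_{i+1}$. Because $\gamma_{i+1}\in\Span\set{\gamma_1,\dots,\gamma_{i+1}}=\Fix(w_{i+1})^\perp$, the reflection $t_{i+1}$ belongs to $P_{i+1}$, and $w_i=w_{i+1}t_{i+1}$ is obtained from the Coxeter element $w_{i+1}$ by right-multiplication by a reflection of $P_{i+1}$ that drops the reflection length from $i+1=\operatorname{rank}P_{i+1}$ to $i$. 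Thus the inductive step is exactly an instance, inside the finite Coxeter group $P_{i+1}$, of the following codimension-one claim: if $v$ is a Coxeter element of a finite Coxeter group $G$ of rank $m$ and $t$ is a reflection of $G$ with $vt$ of reflection length $m-1$, then $vt$ is a Coxeter element of the rank-$(m-1)$ parabolic $G_{\Fix(vt)}$.

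The codimension-one claim is the main obstacle, and it is precisely where the Coxeter (prefix) structure must be used: having reflection length $m-1$ and acting without nonzero fixed vectors on its $(m-1)$-dimensional moved space does not by itself force an element to be a Coxeter element---for instance $-\operatorname{id}$ in type $B_2$ is a product of two orthogonal reflections, has reflection length $2$, yet is not a Coxeter element (and is not a prefix of any reduced factorization of a Coxeter element). To prove the claim I would anchor at the standard factorization $c=s_1\cdots s_n$, which is itself a reduced reflection factorization whose length-$i$ prefix $s_1\cdots s_i$ is visibly a Coxeter element of the standard parabolic $\br{s_1,\dots,s_i}$, and transport this to an arbitrary reduced factorization using transitivity of the Hurwitz action on reduced reflection factorizations of a Coxeter element \cite{Bessis}. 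A single Hurwitz move changes only one prefix, replacing $t_1\cdots t_j$ by $t_1\cdots t_{j-1}(t_jt_{j+1}t_j)=(t_1\cdots t_{j+1})t_j$, which is again a parabolic Coxeter element times a reflection---another instance of the codimension-one claim---so the same claim both closes the induction and drives the propagation. Concretely, the claim is equivalent to the assertion that $\gamma_1,\dots,\gamma_{m-1}$ form a simple system for the sub-root-system $\RS\cap\Span\set{\gamma_1,\dots,\gamma_{m-1}}$, and I would establish this positivity statement from the Brady--Watt geometric description of reflection factorizations \cite{BWKPi,BWOrth}, which tracks exactly how the moved space and its associated roots evolve as reflections are appended to $v$.
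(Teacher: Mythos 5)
First, a point of comparison: the paper does not prove \cref{bessis lemma} at all --- it is quoted as one direction of Bessis's Lemma~1.4.3 --- so the only ``proof'' in the paper is the citation, and your attempt is really an attempt to reprove Bessis's result. Your setup is sound: the identification of $\Fix(w_i)^\perp=\Span\set{\gamma_1,\ldots,\gamma_i}$ via \cref{cox fix}, \cref{reduced T} and Carter's lemma, the passage to the parabolic closure $P_i$, and the isolation of a ``codimension-one claim'' as the crux are all correct. But the argument does not close. The Hurwitz-propagation scheme is circular exactly where you flag it: a Hurwitz move at position $n-1$ replaces the length-$(n-1)$ prefix by $ct_{n-1}$, and showing that \emph{this} is a parabolic Coxeter element is the case $i=n-1$ of the lemma for $W$ itself, not for any smaller group, so induction on rank never reaches it. Saying that ``the same claim both closes the induction and drives the propagation'' names the circle but does not break it. Moreover, Hurwitz transitivity on reduced reflection factorizations of a Coxeter element is itself a theorem of Bessis of depth comparable to the lemma, so even a repaired argument along these lines would not be more self-contained than the paper's citation.

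The fallback you offer for the codimension-one claim is false as stated. It is not true that the positive roots $\gamma_1,\ldots,\gamma_{m-1}$ of the first $m-1$ reflections must form a simple system for $\RS\cap\Span\set{\gamma_1,\ldots,\gamma_{m-1}}$ when the prefix is a parabolic Coxeter element. In type $A_3$ with $c=s_1s_2s_3$, the tuple $(t_{\alpha_1+\alpha_2},t_{\alpha_1},t_{\alpha_3})$ is a reduced reflection factorization of $c$ (one Hurwitz move from $(s_1,s_2,s_3)$), and its length-two prefix is $t_{\alpha_1+\alpha_2}t_{\alpha_1}=s_1s_2$, a Coxeter element of the standard parabolic $\br{s_1,s_2}$; yet $\set{\alpha_1+\alpha_2,\alpha_1}$ is not a simple system, since $K(\alpha_1+\alpha_2,\alpha_1)=1>0$. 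The ``equivalence'' you assert therefore only goes one way, and the direction you would need is the false one; even the weaker claim that the $\gamma_j$ form a simple system after negating some of them is a nontrivial positivity statement that does not follow from the Brady--Watt results quoted in \cref{reduced T}, which control linear independence and fixed spaces but carry no information about obtuseness or sign. As it stands the key step is missing, and the statement should either be cited from \cite{Bessis}, as the paper does, or proved by a genuinely different argument.
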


We say that $A$, $\RS$ and $W$ are of \newword{affine type} if $K$ is positive semidefinite and not positive definite and if the restriction of $K$ to $\Span\set{\alpha_i:i\in J}$ is positive definite for all $J \subsetneq [1,n]$.
In this case, $A$, $W$ and $\RS$ are in particular irreducible.
Background on affine root systems can be found in \cite{Kac90,Macdonald}.
We continue to let $n$ be the rank of $\RS$, even when $\RS$ is affine (despite a common convention where affine root systems have rank $n+1$).

If $A$ is of affine type, then there exists $\aff\in\set{1,\ldots,n}$ \nomenclature[aff]{$\aff$}{index of the affine simple root} such that, writing $W_\fin$ \nomenclature[wfin]{$W_\fin$}{finite Weyl group associated to an affine Cartan matrix} for the subgroup of $W$ generated by $S\setminus\set{s_\aff}$, the group $W$ is isomorphic to a semidirect product of $W_\fin$ with the lattice generated by $\set{\alpha_i\ck:i\neq \aff}$.
The choice of $\aff$ may not be unique, but we fix a choice.

We write $V_\fin$ \nomenclature[vfin]{$V_\fin$}{subspace spanned by $\Simples_\fin$} for the subspace of $V$ spanned by $\Simples\setminus\set{\alpha_\aff}$.
We write $\RSfin$ \nomenclature[zzv5fin]{$\RSfin$}{finite root system associated to an affine Cartan matrix} for $\RS\cap V_\fin$.
This is an indecomposable finite root system.

Some affine root systems arise from indecomposible finite root systems through a standard construction (e.g.\ \cite[Proposition~2.1]{Macdonald}).
These are the \newword{standard} affine root system, shown in Table Aff~1 of \cite[Chapter~4]{Kac90} and in Table~\ref{tab:type-by-type} of the present paper.
Every affine root system $\RS$ is a \newword{rescaling} of a unique standard affine root system $\RS'$.
This means that every root of $\RS$ is a positive scaling of a root in $\RS'$ and that the bilinear form $K$ associated to $\RS$ and $\RS'$ coincide.
In particular, both root systems define the same Weyl group, and the scaling factors relating roots are constant on $W$-orbits of roots.

When $\RS$ is of affine type, the kernel of $K$ is one-dimensional.
The intersection of~$\RS$ with this kernel is the set of imaginary roots and is of the form $\set{x\delta:x\in\integers\setminus\set{0}}$, where $\delta$ is a positive imaginary root. \nomenclature[zzd]{$\delta$}{positive imaginary root closest to the origin}
Because $\delta$ is in the kernel of $K$, it is fixed by every element of~$W$.

Every real root in $\RS$ is a positive scaling of $\beta+k\delta$ for some $\beta\in\RSfin$ and $k\in\integers$.
In some (nonstandard) affine root systems and for some roots $\beta\in\RSfin$ and integers~$k$, $\beta+k\delta$ is not a positive scaling of a root in $\RS$.
In the standard affine root systems, every $\beta+k\delta$ \emph{is} a root.
A root that is a positive rescaling of $\beta+k\delta$ is a positive root if and only if either $k$ is positive or $k=0$ and $\beta$ is positive.
See \cite[Proposition~6.3]{Kac90}.

The expansion of the imaginary root $\delta$ in the basis of simple roots has strictly positive coordinates.
In particular, $[\delta:\alpha_\aff]$ is positive.
(To be precise $[\delta:\alpha_\aff]=1$ in all affine types except for $A_{2k}^{(2)}$ where it is $2$.)
The vector $\theta=\delta-[\delta:\alpha_\aff]\alpha_\aff$ \nomenclature[zzh]{$\theta$}{highest root or highest short root in $\RSfin$} is a positive root in $\RSfin$.
(Usually, including in the standard affine root systems, $\theta$ is the highest root of $\RSfin$, but in some affine root systems, it is the highest short root \cite[Proposition~6.4]{Kac90}.)
In any case, $\alpha_\aff$ is a positive scaling of $\delta-\theta$.
Since $K(\delta,x)=0$ for any $x\in V$, the action of $s_\aff$ on $x$ is:
\begin{equation}\label{simpref}
s_\aff x=t_\theta x+K(\theta\ck,x)\delta.  
\end{equation}

To conclude the section, we quote a result on conjugacy classes of affine Coxeter elements.
In \cite[4.1]{SteinbergFRG}, it is shown that all Coxeter elements in a finite Coxeter group are conjugate, via source-sink moves.
The argument given there applies to all affine types except for $A^{(1)}_{n-1}$, where there are multiple conjugacy classes.
The classification of conjugacy classes in type $A^{(1)}_{n-1}$ is well known, and we quote it as part of the following proposition.

\begin{proposition}\label{aff conj}
If $W$ is an affine Weyl group not of type $A_{n-1}^{(1)}$, then any two Coxeter elements of $W$ are conjugate in $W$.
If $W$ is of type $A_{n-1}^{(1)}$, then there is one conjugacy class for each $k\in[1,n-1]$, represented by the Coxeter element $s_1\cdots s_n$ with $(s_1s_{k+1})^3=(s_ks_n)^3=1$ and $(s_is_{i+1})^3=1$ for $i\neq k,n$.
The conjugations can be carried out by a sequence of source-sink moves.
\end{proposition}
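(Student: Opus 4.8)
The plan is to pass to the standard combinatorial model in which a Coxeter element $c=s_{\sigma(1)}\cdots s_{\sigma(n)}$ is encoded by the acyclic orientation of the Coxeter diagram $\Gamma$ of $W$ that orients the edge between two noncommuting generators $s_i,s_j$ according to their relative order in the word; as is standard, this orientation determines $c$ and every acyclic orientation of $\Gamma$ arises this way. Under this dictionary a generator $s$ is initial in $c$ exactly when its vertex is a source and final exactly when its vertex is a sink, and the source-sink move $c\mapsto scs$ reverses every edge at that vertex, turning a source into a sink or vice versa. Since each source-sink move is conjugation by a simple reflection, source-sink-equivalent Coxeter elements are conjugate in $W$, so it suffices to analyze the equivalence classes of acyclic orientations of $\Gamma$ under reversal of sources and sinks. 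The diagram $\Gamma$ is a tree except when $W$ has type $A^{(1)}_{n-1}$ with $n\ge3$, in which case $\Gamma$ is an $n$-cycle; I would treat these two cases separately, noting that the degenerate case $A^{(1)}_1$ (a single edge) falls under the tree case.

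For the tree case I would reprove the fact underlying Steinberg's argument in \cite{SteinbergFRG}: any two acyclic orientations of a tree are connected by source and sink reversals. I would induct on the number of vertices by deleting a leaf $v$ with neighbor $u$. A leaf is always either a source or a sink, so the single edge at $v$ can be reversed at will; after using the inductive hypothesis on the smaller tree to match all other edges (lifting each reversal at $u$ to a valid reversal in the larger tree, reversing $v$ first if necessary so that $u$ really is a source or a sink there), a final reversal at $v$ corrects the last edge. Hence all acyclic orientations of a tree lie in a single class, so all Coxeter elements are source-sink equivalent and therefore conjugate. This proves the first assertion for every affine type other than $A^{(1)}_{n-1}$, and in the case $A^{(1)}_1$ it gives the single class $k=1$ of the second assertion.

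For $A^{(1)}_{n-1}$ with $n\ge3$, I would record the number $p$ of edges of the $n$-cycle pointing in a fixed cyclic direction; acyclicity forces $1\le p\le n-1$. A source or sink reversal exchanges an adjacent, oppositely oriented pair of edges and so leaves $p$ unchanged, while the same moves let one slide a forward edge past a backward edge in either direction, acting transitively on the orientations with a given value of $p$. Thus there are exactly $n-1$ source-sink classes, one for each $k=p\in[1,n-1]$, and a direct check identifies the orientation with invariant $k$ with the generator labeling in the stated relations, giving the representatives. Because source-sink equivalence refines conjugacy and the well-known classification of conjugacy classes of Coxeter elements in the affine symmetric group also yields exactly $n-1$ classes, the two partitions must coincide; in particular any two conjugate Coxeter elements are source-sink equivalent, which is the content of the final clause.

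The delicate points are, first, the bookkeeping in the tree induction that guarantees each reversal at the neighbor $u$ is genuinely a source or sink reversal in the larger tree, and second, the input that type $A^{(1)}_{n-1}$ really has $n-1$ conjugacy classes rather than merely the $n-1$ source-sink classes we construct. For the latter I would rely on the structure $W\cong W_\fin\ltimes Q^\vee$ with $W_\fin\cong S_n$: every Coxeter element projects to an $n$-cycle $\sigma$ in $S_n$, and its class is pinned down by the image of its translation part in $Q^\vee/(1-\sigma)Q^\vee$, which recovers the invariant $k$ and shows that distinct values of $k$ are genuinely nonconjugate. This is exactly the well-known classification quoted in the statement, which I would cite rather than redevelop.
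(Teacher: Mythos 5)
The paper does not actually prove this proposition: it quotes Steinberg's source--sink argument for the non-$A^{(1)}_{n-1}$ types and quotes the ``well-known'' classification in type $A^{(1)}_{n-1}$. Your proposal writes out precisely the argument being cited --- the dictionary between Coxeter elements and acyclic orientations of the diagram, the leaf-deletion induction showing all acyclic orientations of a tree are source--sink connected, and the invariant $p$ (the number of edges oriented in a fixed cyclic direction) that classifies source--sink classes on an $n$-cycle --- and all of these steps are correct, including the careful lifting of reversals at the neighbor $u$ in the tree induction. So your route is essentially the one the paper points to, just made explicit. One factual slip in your closing sketch: under the projection $W\cong W_\fin\ltimes Q^\vee\to W_\fin\cong S_n$, a Coxeter element of class $k$ maps not to an $n$-cycle but to $c_\laff t_\theta c_\gaff$, which is a product of a $k$-cycle and an $(n-k)$-cycle (its fixed space in $V_\fin$ is the line $\reals\gamma_c$, so it cannot be an $n$-cycle); the correct nonconjugacy argument must therefore also separate $k$ from $n-k$, e.g.\ via the translation part. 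Since you, like the paper, ultimately cite the known classification for that half rather than prove it, this does not invalidate the proof, but the sketch as written would not survive being promoted to an actual argument.
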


\section{Eigenvalues and eigenvectors of affine Coxeter elements}\label{sec:eigen}
Let $c=s_1\cdots s_n$ be a Coxeter element.
As in the introduction, let $c_\laff=s_1\cdots s_{\aff-1}$ \nomenclature[czzzzt1]{$c_\laff$}{$cs_1\cdots s_{\aff-1}$} and let $c_\gaff=s_{\aff+1}\cdots s_n$ \nomenclature[czzzzt2]{$c_\gaff$}{$s_{\aff+1}\cdots s_n$}.
The elements $c_\laff$ and $c_\gaff$ depend on how $c$ is expressed as the product of a permutation of $S$ (for example if $s_{\aff-1}$ commutes with $s_\aff$) but the statements and arguments are correct regardless of the choice.
We write $c=c_\laff s_\aff c_\gaff $ to refer to this definition of $c_\laff $ and $c_\gaff $.

\begin{proposition}\label{eigen} 
Let $c=c_\laff s_\aff c_\gaff $ be a Coxeter element in an affine Weyl group.
\begin{enumerate}[\qquad \upshape (1)]
\item \label{eigen 1}
$c$ has eigenvalue $1$ with algebraic multiplicity $2$ and geometric multiplicity~$1$.
The imaginary root $\delta$ is a $1$-eigenvector.
\item \label{eigen 1 gen}
There exists a unique generalized $1$-eigenvector $\gamma_c$ \nomenclature[zzcc]{$\gamma_c$}{$1$-eigenvector of $c$}contained in the subspace~$V_\fin$ of $V$.
(This means that $(c-1)\gamma_c=\delta$.)
The vector $\gamma_c$ is the unique vector in $V_\fin$ fixed by $c_\laff t_\theta c_\gaff $ and having $K(\theta\ck,c_\gaff \gamma_c)=1$.
\item \label{eigen neq 1}
There is a linearly independent set of $n-2$ eigenvectors of $c$ with eigenvalues $\lambda$ roots of unity but $\lambda\neq 1$.
\item \label{eigen orth}
The direct sum $\eigenspace{c}$ \nomenclature[uc]{$\eigenspace{c}$}{subspace generated by eigenvectors of $c$.  See also \cref{fin conv}}of all eigenspaces of $c$ is $\set{v\in V:K(\gamma_c,v)=0}$.
\item \label{eigen neq 1 details}
For $\lambda\neq1$, a vector $v\in V_\fin$ is a $\lambda$-eigenvector of $c_\laff t_\theta c_\gaff $ if and only if~${v+\frac{K(\theta\ck,c_\gaff v)}{(\lambda-1)}\delta}$ is a $\lambda$-eigenvector of $c$.
\end{enumerate}
\end{proposition}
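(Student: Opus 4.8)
The plan is to transfer the whole problem to the finite Weyl group $W_\fin$ via the splitting $V=V_\fin\oplus\complexes\delta$, which is valid because $[\delta:\alpha_\aff]>0$ forces $\delta\notin V_\fin$. First I would record the identity behind everything: writing $\bar c=c_\laff t_\theta c_\gaff$ and $\phi(x)=K(\theta\ck,c_\gaff x)$, applying \cref{simpref} to $s_\aff$ and using $c_\laff\delta=\delta$ gives $cx=\bar c\,x+\phi(x)\delta$ for all $x\in V$. Since $\theta$ is a finite root and $c_\laff,c_\gaff$ are products of simple reflections other than $s_\aff$, the operator $\bar c$ lies in $W_\fin$ and preserves $V_\fin$; moreover $\phi(\delta)=K(\theta\ck,\delta)=0$ because $\delta\in\ker K$. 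Thus in the splitting $V=V_\fin\oplus\complexes\delta$ the operator $c$ is block triangular with diagonal blocks $\bar c|_{V_\fin}$ and the scalar $1$; in particular $c\delta=\delta$, and the characteristic polynomial of $c$ is that of $\bar c|_{V_\fin}$ times $(x-1)$.

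The heart of the argument is the eigenstructure of $\bar c|_{V_\fin}$. As $\bar c\in W_\fin$ and $W_\fin$ is finite, $\bar c$ is semisimple with all eigenvalues roots of unity, so the only issue is the multiplicity of the eigenvalue $1$, i.e.\ $\dim\Fix(\bar c)$. To compute it I would conjugate, using $wt_\theta w^{-1}=t_{w\theta}$, to write $\bar c=t_{c_\laff\theta}\,c_\fin$ with $c_\fin=c_\laff c_\gaff$ a genuine Coxeter element of $W_\fin$, so $\Fix(c_\fin)=\set{0}$ by \cref{cox fix}. Writing $\ell_T$ for reflection length and using \cref{reduced T} to identify $\ell_T(w)$ with the codimension of $\Fix(w)$, a determinant/parity argument shows that multiplication by the single reflection $t_{c_\laff\theta}$ changes $\ell_T$ by exactly $\pm1$; since $\ell_T$ cannot exceed $\dim V_\fin=n-1$, we must have $\ell_T(\bar c)=n-2$, that is $\dim\Fix(\bar c)=1$. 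Hence $\bar c|_{V_\fin}$ has the eigenvalue $1$ with multiplicity exactly $1$ and a complementary family of $n-2$ eigenvectors $v_1,\dots,v_{n-2}$ whose eigenvalues $\lambda_i$ are roots of unity different from $1$. With the factorization of the characteristic polynomial this already gives the algebraic multiplicity $2$ in part (1).

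Next I would prove part (5) by the direct computation $c\bigl(v+\tfrac{\phi(v)}{\lambda-1}\delta\bigr)=\bar c\,v+\phi(v)\tfrac{\lambda}{\lambda-1}\delta$ (using $c\delta=\delta$ and $\phi(\delta)=0$): comparing $V_\fin$-components shows this equals $\lambda\bigl(v+\tfrac{\phi(v)}{\lambda-1}\delta\bigr)$ if and only if $\bar c\,v=\lambda v$. Applying this to $v_1,\dots,v_{n-2}$ produces $n-2$ eigenvectors $w_i$ of $c$ with eigenvalues $\lambda_i\neq1$, linearly independent because their $V_\fin$-components $v_i$ are; this is part (3). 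For part (2), let $\gamma_0$ span $\Fix(\bar c)$; then $(c-1)\gamma_0=\phi(\gamma_0)\delta$. I claim $\phi(\gamma_0)\neq0$: part (5) matches the $\lambda$-eigenspaces of $c$ and $\bar c$ for every $\lambda\neq1$, so the only place $c$ can fail to be semisimple is the coupling of $\gamma_0$ to $\delta$ at the eigenvalue $1$; were $\phi(\gamma_0)=0$, then $\gamma_0$ and $\delta$ would both be genuine $1$-eigenvectors and $c$ would be semisimple with all eigenvalues roots of unity, hence of finite order, contradicting \cref{Speyer}. Setting $\gamma_c=\gamma_0/\phi(\gamma_0)$ gives $(c-1)\gamma_c=\delta$ and $K(\theta\ck,c_\gaff\gamma_c)=1$; comparing $V_\fin$- and $\delta$-components shows these two conditions are equivalent to $\gamma_c$ being the unique vector of $V_\fin$ fixed by $\bar c$ with $\phi(\gamma_c)=1$, and uniqueness follows since any two solutions differ by an element of $\ker(c-1)\cap V_\fin=\complexes\delta\cap V_\fin=\set{0}$. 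Finally $\phi(\gamma_0)\neq0$ shows the $1$-eigenspace of $c$ is exactly $\complexes\delta$, giving geometric multiplicity $1$ and completing part (1).

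For part (4), the vectors $\delta,w_1,\dots,w_{n-2}$ are linearly independent and, by the bijection of part (5), span every eigenspace, so $\eigenspace{c}$ is the $(n-1)$-dimensional hyperplane they span. Because $\gamma_c\in V_\fin$ is nonzero and $K$ is positive definite on $V_\fin$, the functional $K(\gamma_c,\cdot)$ is nonzero, so $\set{v:K(\gamma_c,v)=0}$ is also a hyperplane and it suffices to show $\eigenspace{c}$ is contained in it. This holds for $\delta$ since $\delta\in\ker K$, and for a $\lambda$-eigenvector $w$ with $\lambda\neq1$ it follows from $K(\gamma_c,w)=K(c\gamma_c,cw)=K(\gamma_c+\delta,\lambda w)=\lambda K(\gamma_c,w)$, using that $c$ preserves $K$ and $\delta\in\ker K$. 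I expect the main obstacle to be the Jordan structure at the eigenvalue $1$: establishing $\dim\Fix(\bar c)=1$ (so the algebraic multiplicity is exactly $2$) through the reflection-length computation, and then upgrading the resulting two-dimensional generalized eigenspace to a single size-$2$ Jordan block by feeding in the infinite order of $c$ from \cref{Speyer}. Everything else reduces to linear-algebra bookkeeping around the identity $cx=\bar c\,x+\phi(x)\delta$.
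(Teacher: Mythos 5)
Your proof is correct, and its skeleton matches the paper's: the same identity $cx=c_\laff t_\theta c_\gaff x+K(\theta\ck,c_\gaff x)\delta$, the same parity-of-reflection-length argument (via \cref{reduced T} and \cref{cox fix}) to pin down $\dim\Fix(c_\laff t_\theta c_\gaff)=1$ on $V_\fin$, the same map $v\mapsto v+\frac{K(\theta\ck,c_\gaff v)}{\lambda-1}\delta$ for part (5), and the same $K$-invariance computation for part (4). The one genuinely different step is the crucial nondegeneracy claim that $K(\theta\ck,c_\gaff\,\cdot\,)$ does not vanish on the fixed line of $c_\laff t_\theta c_\gaff$, which is what forces the Jordan block at $1$ and produces $\gamma_c$. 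The paper gets this constructively: it solves the fixed-point equation explicitly, showing that any fixed $v$ must equal $K(\theta\ck,c_\gaff v)(c_\laff c_\gaff-1)^{-1}c_\laff\theta$ (using \cref{cox fix} to invert $c_\laff c_\gaff-1$ on $V_\fin$), so vanishing of the coefficient would force $v=0$; as a bonus this yields a closed formula for the line $\reals\gamma_c$. You instead argue by contradiction: if the coefficient vanished, $c$ would be diagonalizable with root-of-unity eigenvalues, hence of finite order, contradicting \cref{Speyer}. Your route is softer and imports the (nontrivial) fact that affine Coxeter elements have infinite order, whereas the paper's route is self-contained linear algebra and gives the explicit fixed vector; both are valid, and your version makes transparent exactly where the failure of semisimplicity is equivalent to the infinitude of $W$.
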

\begin{proof}
Applying $c_\laff $ to the left of both sides of \cref{simpref}, keeping in mind that $\delta$ is fixed by the action of $W$, and setting $x=c_\gaff v$ for $v\in V_\fin$, we obtain 
\begin{equation}\label{simpref plus}
cv=c_\laff t_\theta c_\gaff v+K(\theta\ck,c_\gaff v)\delta
\end{equation}
Thus $c_\laff t_\theta c_\gaff v=\lambda v$ if and only if $cv-K(\theta\ck,c_\gaff v)\delta=\lambda v$.
Adding the vector $\frac\lambda{\lambda-1}K(\theta\ck,c_\gaff v)\delta$ to both sides of the latter and again keeping in mind that $\delta$ is fixed by the action of $W$, we obtain  $c\bigl(v+\frac{K(\theta\ck,c_\gaff v)}{(\lambda-1)}\delta\bigr)=\lambda\bigl(v+\frac{K(\theta\ck,c_\gaff v)}{(\lambda-1)}\delta\bigr)$.
We have established \eqref{eigen neq 1 details}.

We next check that the fixed space of $c_\laff t_\theta c_\gaff $ in $V_\fin$ is at least one-dimensional.
Since $c_\laff t_\theta c_\gaff $ is in the finite Coxeter goup $W_\fin$, and since $K$ restricts to a Euclidean form on $V_\fin$ we can use \cref{reduced T}.
Therefore to establish the claim (since the rank of $W_\fin$ is $n-1$) it suffices to check that a shortest expression for $c_\laff t_\theta c_\gaff $ as a product of reflections in $W_\fin$ has length at most $n-2$.
The same proposition implies that a shortest expression for $c_\laff t_\theta c_\gaff $ has length at most $n-1$.
But since the determinant of a reflection is $-1$, every expression for $c_\laff t_\theta c_\gaff $ as product of reflections has the same length modulo 2.
The expression $c_\laff t_\theta c_\gaff $ is a product of $n$ reflections in $W_\fin$ ($n-1$ simple reflections and $t_\theta$) and  we conclude  that a shortest expression for $c_\laff t_\theta c_\gaff $ has length at most $n-2$ as desired.

Given a vector $v\in V_\fin$, we compute $c_\laff t_\theta c_\gaff v=c_\laff (c_\gaff v-K(\theta\ck,c_\gaff v)\theta)$, so $v$ is fixed by $c_\laff t_\theta c_\gaff $ if and only if $c_\laff (c_\gaff v-K(\theta\ck,c_\gaff v)\theta)=v$ if and only if $(c_\laff c_\gaff -1) v=K(\theta\ck,c_\gaff v) c_\laff \theta$.  
But $c_\laff c_\gaff $ is a Coxeter element of $W_\fin$ and thus \cref{cox fix} says that the fixed space of $c_\laff c_\gaff $ on $V_\fin$ is $\set{0}$.  
Thus $(c_\laff c_\gaff -1)$ is invertible on $V_\fin$, so $v$ is fixed by $c_\laff t_\theta c_\gaff $ if and only if $v=K(\theta\ck,c_\gaff v)(c_\laff c_\gaff -1)^{-1}c_\laff \theta$.
In particular, the fixed space of $c_\laff t_\theta c_\gaff $ is contained in $\mathbb{R}(c_\laff c_\gaff -1)^{-1}c_\laff \theta$, but since the fixed space is at least one dimensional, it is exactly the line $\mathbb{R}(c_\laff c_\gaff -1)^{-1}c_\laff \theta$.
Furthermore, each nonzero vector $v$ fixed by $c_\laff t_\theta c_\gaff $ has $K(\theta\ck,c_\gaff v)\neq 0$.

The element $c_\laff t_\theta c_\gaff $ is an orthogonal transformation of the Euclidean vector space $V_\fin$, so it has eigenvalues which are roots of unity and also has a basis of eigenvectors.
We have seen that only one of these eigenvectors has eigenvalue $1$.
Since $\delta$ is not in $V_\fin$ and in light of \eqref{eigen neq 1 details}, the map $v\mapsto  v+\frac{K(\theta\ck,c_\gaff v)}{(\lambda-1)}\delta$ takes the $n-2$ non-fixed eigenvectors in a basis of eigenvectors of $c_\laff t_\theta c_\gaff $ to a linearly independent set of eigenvectors of $c$.
We have proved~\eqref{eigen neq 1}.

Furthermore, we see that $c$ has the eigenvalue $1$ with algebraic multiplicity $2$. 
The vector $\delta$ is a $1$-eigenvector, because it is fixed by the action of $c$.
A nonzero vector $v\in V_\fin$ fixed by $c_\laff t_\theta c_\gaff $ has $K(\theta\ck,c_\gaff v)\neq0$, so in particular, there indeed exists a unique vector $\gamma_c\in V_\fin$ fixed by $c_\laff t_\theta c_\gaff $ and having $K(\theta\ck,c_\gaff \gamma_c)=1$.
By \cref{simpref} and because $\delta$ is fixed by $W$, we compute $(c-1)\gamma_c={c_\laff t_\theta c_\gaff \gamma_c +K(\theta\ck,c_\gaff \gamma_c)\delta-\gamma_c}=\delta$.
We conclude that the the eigenvalue $1$ has geometric multiplicity $1$ and that $\gamma_c$ is a generalized $1$-eigenvector.
We have proved \eqref{eigen 1}.
A generalized $1$-eigenvector is unique up to adding a multiple of the $1$-eigenvector $\delta$, and \eqref{eigen 1 gen} follows.

By \eqref{eigen 1} and \eqref{eigen neq 1}, $c$ has a linearly independent set of $n-1$ eigenvectors.
Thus to prove \eqref{eigen orth}, it is enough to show that if $v$ is an eigenvector of $c$, then $K(\gamma_c,v)=0$. 
If $v$ has eigenvalue $\lambda$, then $K(\gamma_c,v)=K(c\gamma_c,cv)=K(\gamma_c+\delta,\lambda v)=\lambda K(\gamma_c,v)$.
If $\lambda\neq1$, then we conclude that $K(\gamma_c,v)=0$.
If $\lambda=1$, then $v$ is a multiple of $\delta$, so again $K(\gamma_c,v)=0$.
\end{proof}

We now describe how $\gamma_c$ transforms under source-sink moves.
\begin{proposition}\label{scs gamma}
Let $\RS$ be an affine root system and let $c$ be a Coxeter element.
If $s$ is initial or final in $c$, then 
\[\gamma_{scs}=\begin{cases}
s\gamma_c&\text{if }s\neq s_\aff\\
t_\theta\gamma_c&\text{if }s=s_\aff
\end{cases}\]
\end{proposition}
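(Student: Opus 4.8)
The plan is to exploit the clean characterization of $\gamma_c$ provided by \cref{eigen}\eqref{eigen 1 gen}: it is the unique vector of $V_\fin$ satisfying $(c-1)\gamma_c=\delta$. Since $s$ is assumed initial or final in $c$, the element $c'=scs$ is again a Coxeter element of the same affine Weyl group. The affine node $\aff$, and therefore $V_\fin$, the imaginary root $\delta$, and the root $\theta$, all depend only on the root system and the fixed choice of $\aff$, not on $c$; hence they are unchanged by the source-sink move. Thus \cref{eigen} applies verbatim to $c'$, and $\gamma_{c'}$ is the unique vector of $V_\fin$ with $(c'-1)\gamma_{c'}=\delta$. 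It therefore suffices, in each of the two cases, to check that the proposed vector lies in $V_\fin$ and is carried to $\delta$ by $c'-1$; the identity then follows from uniqueness.

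For $s\neq s_\aff$, write $s=s_i$ with $i\neq\aff$. Then $s$ preserves $V_\fin$ (it sends each $\alpha_j$ with $j\neq\aff$ to a combination of simple roots indexed away from $\aff$), so $s\gamma_c\in V_\fin$. Using $c'=scs$, the fact that $s$ is an involution, and that $\delta$ is fixed by $W$, I would compute $(c'-1)(s\gamma_c)=s\,c\,\gamma_c-s\gamma_c=s(c-1)\gamma_c=s\delta=\delta$. By uniqueness, $\gamma_{c'}=s\gamma_c$.

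The case $s=s_\aff$ is where the real content lies. Since $\theta\in\RSfin\subset V_\fin$, the reflection $t_\theta$ preserves $V_\fin$, so $t_\theta\gamma_c\in V_\fin$, and it remains to verify $(s_\aff c\,s_\aff-1)(t_\theta\gamma_c)=\delta$. The tool is \cref{simpref}, which writes $s_\aff x=t_\theta x+K(\theta\ck,x)\delta$, together with $K(\theta\ck,\theta)=2$, $t_\theta^2=1$, $c\delta=\delta$, and $c\gamma_c=\gamma_c+\delta$. Concretely I would apply $s_\aff$, then $c$, then $s_\aff$ to $t_\theta\gamma_c$: the first application (via \cref{simpref} and $t_\theta^2=1$) turns $s_\aff t_\theta\gamma_c$ into $\gamma_c-K(\theta\ck,\gamma_c)\delta$; applying $c$ uses $c\gamma_c=\gamma_c+\delta$; and the final $s_\aff$ (again via \cref{simpref}) restores $t_\theta\gamma_c$ up to a multiple of $\delta$.

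The one point requiring care---and the only real obstacle---is the $\delta$-bookkeeping in this last case: one must track the scalar $K(\theta\ck,\gamma_c)$ through the three steps and confirm that the $\delta$-coefficients accumulate so that the net effect of $s_\aff c\,s_\aff-1$ on $t_\theta\gamma_c$ is precisely $\delta$. I expect the two occurrences of $K(\theta\ck,\gamma_c)\delta$ to cancel, leaving exactly the $+\delta$ introduced by $c$, whence $\gamma_{c'}=t_\theta\gamma_c$. It is worth noting that the initial/final hypothesis is used only to make $scs$ a Coxeter element (so that $\gamma_{scs}$ is defined); neither computation needs the factorization $c=c_\laff s_\aff c_\gaff$ to be unpacked.
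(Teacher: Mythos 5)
Your proof is correct and takes essentially the same route as the paper: both rest on the uniqueness clause of \cref{eigen}\eqref{eigen 1 gen} together with the computation $(scs-1)(s\gamma_c)=s(c-1)\gamma_c=s\delta=\delta$, plus the observation that $s\gamma_c\in V_\fin$ exactly when $s\neq s_\aff$. The only divergence is in the $s=s_\aff$ case, where the paper sidesteps your three-step calculation by noting that \eqref{simpref} makes $t_\theta\gamma_c$ differ from $s_\aff\gamma_c$ by a multiple of $\delta$ (and adding a multiple of the $1$-eigenvector $\delta$ preserves the equation $(scs-1)v=\delta$); your direct verification also goes through, since $K(\theta\ck,t_\theta\gamma_c)=-K(\theta\ck,\gamma_c)$ produces exactly the cancellation you predicted.
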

\begin{proof}
By \cref{eigen}\eqref{eigen 1 gen}, $\gamma_{scs}$ is the unique generalized $1$-eigenvector of $scs$ contained in $V_\fin$.
Since $\gamma_c$ is a generalized $1$-eigenvector of $c$ and $\delta$ is fixed by $s$, we have $(scs-1)(s\gamma_c)=s(c-1)\gamma_c=s\delta=\delta$, so $s\gamma_c$ is a generalized $1$-eigenvector for $scs$.
Now $\gamma_c$ is in $V_\fin$, so $s\gamma_c$ is in $V_\fin$ if $s\neq s_\aff$.
If $s=s_\aff$, then \eqref{simpref} says that $t_\theta\gamma_c$ differs from $s\gamma_c$ by a multiple of $\delta$, so $t_\theta\gamma_c$ is also a generalized $1$-eigenvector for $scs$.
Also, $t_\theta\gamma_c$ is in $V_\fin$.
\end{proof}
Combining this result with \eqref{simpref} and \cref{eigen}(\ref{eigen orth}) we get the following.
\begin{corollary}
  If $s$ is initial or final in $c$, then $\eigenspace{scs}=s\eigenspace{c}$.
\end{corollary}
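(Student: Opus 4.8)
The plan is to reduce everything to the orthogonality description of $\eigenspace{c}$ supplied by \cref{eigen}\eqref{eigen orth}. That part identifies $\eigenspace{c}$ with $\set{v\in V:K(\gamma_c,v)=0}$, and applied to the Coxeter element $scs$ it identifies $\eigenspace{scs}$ with $\set{v\in V:K(\gamma_{scs},v)=0}$. Since every element of $W$ is a symmetry of $K$, the reflection $s$ satisfies $K(s\gamma_c,v)=K(\gamma_c,s^{-1}v)$ for all $v$, so it carries $\set{v:K(\gamma_c,v)=0}$ onto $\set{v:K(s\gamma_c,v)=0}$; that is, $s\eigenspace{c}=\set{v\in V:K(s\gamma_c,v)=0}$. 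It therefore suffices to verify that the linear functionals $K(\gamma_{scs},\,\cdot\,)$ and $K(s\gamma_c,\,\cdot\,)$ have the same kernel.

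First I would dispose of the case $s\neq s_\aff$. Here \cref{scs gamma} gives $\gamma_{scs}=s\gamma_c$ outright, so the two functionals are literally equal and $\eigenspace{scs}=\set{v:K(s\gamma_c,v)=0}=s\eigenspace{c}$.

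The only case requiring an argument is $s=s_\aff$, where \cref{scs gamma} gives $\gamma_{scs}=t_\theta\gamma_c$ rather than $s\gamma_c$. The key observation is that these two vectors differ by a multiple of $\delta$: applying \eqref{simpref} with $x=\gamma_c$ yields $s_\aff\gamma_c=t_\theta\gamma_c+K(\theta\ck,\gamma_c)\delta$. Because $\delta$ spans the kernel of $K$ in affine type, $K(\delta,v)=0$ for every $v$, whence $K(t_\theta\gamma_c,v)=K(s\gamma_c,v)$ for all $v\in V$. Thus the functionals $K(\gamma_{scs},\,\cdot\,)$ and $K(s\gamma_c,\,\cdot\,)$ coincide once more, and $\eigenspace{scs}=\set{v:K(\gamma_{scs},v)=0}=\set{v:K(s\gamma_c,v)=0}=s\eigenspace{c}$.

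The computation is short and there is no real obstacle; the single point worth flagging is that $K$ is degenerate in affine type, so the ``orthogonal complement'' of a vector $v$ must be read as the kernel of the functional $K(v,\,\cdot\,)$. It is exactly this degeneracy---the fact that the discrepancy between $t_\theta\gamma_c$ and $s\gamma_c$ lies in $\ker K=\mathbb{R}\delta$---that makes the $s=s_\aff$ case go through, and this is the only place where the two clauses of \cref{scs gamma} behave differently.
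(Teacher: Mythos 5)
Your proof is correct and is exactly the combination the paper has in mind: it gives no written proof, only the remark that the corollary follows by combining \cref{scs gamma}, \eqref{simpref}, and \cref{eigen}\eqref{eigen orth}, which is precisely what you carry out (including the key point that in the $s=s_\aff$ case the discrepancy $K(\theta\ck,\gamma_c)\delta$ lies in $\ker K$ and so does not affect the functional $K(\gamma_{scs},\,\cdot\,)$). No gaps.
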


Continuing on the topic of eigenvectors of $c$, we identify and characterize the $1$-eigenspace of the action of $c$ on the space dual to $V$.
We write $V^*$ \nomenclature[v_zzzzd]{$V^*$}{dual space to $V$} for the dual space to $V$ and $\br{\,\cdot\,,\,\cdot\,}$ \nomenclature[zzzz]{$\br{\,\cdot\,,\,\cdot\,}$}{canonical pairing between $V^*$ and $V$} for the canonical pairing between $V^*$ and $V$. 
As usual, the action of $W$ on $V^*$, dual to its action on $V$, is given by $\br{w\phi,v}:=\br{\phi,w^{-1}v}$.
Denote by $\phi_c$ \nomenclature[zzv8c]{$\phi_c$}{$K(\gamma_c,\,\cdot\,)$} the element of $V^*$ defined by $\br{\phi_c,v}=K(\gamma_c,v)$ for all $v\in V$.
\begin{lemma}\label{phic lemma}
  The fixed space of the action of $c$ on $V^*$ is $\reals\phi_c$.
\end{lemma}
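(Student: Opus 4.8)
The plan is to prove the two containments at once by exhibiting $\phi_c$ as a nonzero fixed vector and then showing that the fixed space is at most one-dimensional. Concretely, I would first verify that $\phi_c$ is fixed by the action of $c$ on $V^*$ and is nonzero, and then count dimensions to conclude that this one vector already spans the whole fixed space, so that it equals $\reals\phi_c$.

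First I would check that $\phi_c$ is fixed. By the definition of the dual action, $\br{c\phi_c,v}=\br{\phi_c,c^{-1}v}=K(\gamma_c,c^{-1}v)$ for every $v\in V$. Since $c\in W$ preserves $K$, this equals $K(c\gamma_c,v)$. Now $\gamma_c$ is a generalized $1$-eigenvector with $(c-1)\gamma_c=\delta$ by \cref{eigen}\eqref{eigen 1 gen}, so $c\gamma_c=\gamma_c+\delta$; and because $\delta$ lies in the kernel of $K$, we obtain $K(c\gamma_c,v)=K(\gamma_c,v)+K(\delta,v)=K(\gamma_c,v)=\br{\phi_c,v}$. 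Hence $c\phi_c=\phi_c$. To see that $\phi_c\neq 0$, note that $\gamma_c$ is a nonzero vector of $V_\fin$ and that $K$ restricts to a nondegenerate (Euclidean) form there, so $K(\gamma_c,\gamma_c)\neq 0$, whence $\br{\phi_c,\gamma_c}\neq 0$.

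It then remains to bound the dimension of the fixed space. A functional $\phi$ is fixed by $c$ precisely when $\br{\phi,(c^{-1}-1)v}=0$ for all $v\in V$, that is, when $\phi$ lies in the annihilator of $\operatorname{im}(c^{-1}-1)$. Since $c^{-1}-1=-c^{-1}(c-1)$ and $c^{-1}$ is invertible, this image has the same dimension as $\operatorname{im}(c-1)$, so the fixed space has dimension $\dim V-\dim\operatorname{im}(c-1)=\dim\ker(c-1)$. By \cref{eigen}\eqref{eigen 1} the geometric $1$-eigenspace $\ker(c-1)$ is one-dimensional, so the fixed space of $c$ on $V^*$ is one-dimensional, and combining with the previous paragraph gives that it equals $\reals\phi_c$. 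The one point I would be most careful about is exactly this final count: the geometric multiplicity of the eigenvalue $1$ is preserved in passing to the dual action, but this must be argued through the annihilator identity above rather than by identifying $V^*$ with $V$ via $K$, since $K$ is degenerate in affine type and no such identification is available.
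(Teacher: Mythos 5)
Your proof is correct and follows essentially the same route as the paper: verify by the same chain of equalities that $\phi_c$ is fixed, check it is nonzero, and conclude by noting the fixed space on $V^*$ is one-dimensional because the $1$-eigenspace of $c$ on $V$ is. Your two small variations---deducing $\phi_c\neq 0$ from positive-definiteness of $K$ on $V_\fin$ rather than from \cref{eigen}\eqref{eigen orth}, and spelling out the dual dimension count via the annihilator of $\operatorname{im}(c^{-1}-1)$ (which the paper asserts in one sentence)---are both valid and, if anything, slightly more self-contained.
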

\begin{proof}
We make a straightforward computation, using the definition of $\phi_c$, the fact that $K$ is invariant under the action of $W$, \cref{eigen}\eqref{eigen 1 gen}, and the fact that $K(\delta,v)=0$ for all $v\in V$. 
Specifically, for $v$ in $V$, we calculate
		\[
			\br{c\phi_c,v}=
			\br{\phi_c,c^{-1}v}=
			K(\gamma_c,c^{-1}v)=
			K(c\gamma_c,v)=
			K(\delta + \gamma_c,v)=
			K(\gamma_c,v)=
			\br{\phi_c,v}.
		\]
The fixed space of $c$ acting on $V^*$ is a line because the fixed space of $c$ on $V$ is a line.
\cref{eigen}\eqref{eigen orth} implies that $\phi_c\neq0$, so it spans the fixed space.
\end{proof}

We now relate $\phi_c$ to a vector in $V^*$ that plays a key role in \cite{afframe}.
Following \cite[Section~4.4]{afframe}, we write $x_c$ \nomenclature[xc]{$x_c$}{$\omega_c(\delta,\,\cdot\,)$} for the linear functional $\omega_c(\delta,\,\cdot\,)$ in $V^*$.  

\begin{lemma}\label{xc lemma}
The vector $x_c$ is fixed by $c$ and is a negative scaling of $\phi_c$.
\end{lemma}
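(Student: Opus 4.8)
The plan is to treat the two assertions together, extracting both from a single algebraic identity relating $\omega_c$, $K$, and the action of $c$. By \cref{phic lemma} the fixed space of $c$ on $V^*$ is the line $\reals\phi_c$, so once I know that $x_c$ is fixed by $c$ it will follow that $x_c=\lambda\phi_c$ for some scalar $\lambda$, and the only remaining task is to pin down the sign of $\lambda$. Thus the whole lemma reduces to (i) showing $x_c$ is $c$-fixed and (ii) computing the sign of $\lambda$, and I will obtain both from the same formula.

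The key input, which I expect to be the main obstacle, is the identity
\[
\omega_c(x,cy)=\omega_c(x,y)+K(x,y)+K(x,cy)\qquad\text{for all }x,y\in V.
\]
This encodes the classical fact that $c=s_1\cdots s_n$ acts as the Coxeter transformation attached to $K$ and the chosen ordering of the simple roots. Concretely, writing $\mu=\tfrac12(K+\omega_c)$ for the ``lower-triangular half'' of $K$ in the ordered basis $\Simples$ (so that $\mu+\mu^t=K$ and $\mu-\mu^t=\omega_c$), the displayed identity is equivalent to $\mu^t c=-\mu$, that is $c=-(\mu^t)^{-1}\mu$. Note that $\mu$ is triangular in the basis $\alpha_1,\dots,\alpha_n$ with diagonal entries $\tfrac12K(\alpha_i,\alpha_i)=d_i>0$, hence invertible, so the expression makes sense even though $K$ is degenerate in affine type. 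I would establish $\mu^t c=-\mu$ by the standard telescoping computation over the factors of $c=s_1\cdots s_n$ (this is exactly where the triangularity of $\mu$ enters), or cite it from \cite{typefree,afframe}; verifying it carefully is the one genuinely computational step.

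Granting the identity, both parts are short. Setting $x=\delta$ and using $K(\delta,\,\cdot\,)=0$ gives $\omega_c(\delta,cy)=\omega_c(\delta,y)$ for all $y$, that is $\br{x_c,cy}=\br{x_c,y}$; replacing $y$ by $c^{-1}v$ yields $\br{cx_c,v}=\br{x_c,v}$, so $x_c$ is fixed by $c$ and therefore $x_c=\lambda\phi_c$. For the sign I evaluate at $\gamma_c$: setting $x=y=\gamma_c$ in the identity and using $c\gamma_c=\gamma_c+\delta$ from \cref{eigen}\eqref{eigen 1 gen}, together with $\omega_c(\gamma_c,\gamma_c)=0$ and $K(\gamma_c,\delta)=0$, gives $-\omega_c(\delta,\gamma_c)=2K(\gamma_c,\gamma_c)$, whence $\br{x_c,\gamma_c}=\omega_c(\delta,\gamma_c)=-2K(\gamma_c,\gamma_c)$. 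Since $\gamma_c$ is a nonzero vector of $V_\fin$ and $K$ restricts to a positive definite form there, $K(\gamma_c,\gamma_c)>0$; comparing with $\br{\phi_c,\gamma_c}=K(\gamma_c,\gamma_c)>0$ forces $\lambda=-2$. In particular $x_c=-2\phi_c$ is a negative scaling of $\phi_c$, completing the proof.
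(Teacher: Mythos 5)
Your proof is correct, but it takes a genuinely different route from the paper's. The paper establishes $c$-invariance of $x_c$ by applying \cref{omega invariant} $n$ times---conjugating by $s_1,\dots,s_n$ in turn cycles $c$ back to itself and yields $\omega_c(\delta,c^{-1}v)=\omega_c(c\delta,v)=\omega_c(\delta,v)$---and then determines the sign by evaluating both functionals on the test vector $c_\gaff^{-1}\theta\ck$: the normalization $K(\theta\ck,c_\gaff\gamma_c)=1$ gives $\br{\phi_c,c_\gaff^{-1}\theta\ck}=1$, while a computation directly from \cref{omega def} (with a small case split on the coefficient $[\delta:\alpha_\aff]$) shows $\br{x_c,c_\gaff^{-1}\theta\ck}<0$. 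You instead rest everything on the identity $\omega_c(x,cy)=\omega_c(x,y)+K(x,y)+K(x,cy)$, equivalently $c=-(\mu^t)^{-1}\mu$ for $\mu=\tfrac12(K+\omega_c)$; this is correct and standard (it checks out, e.g., in rank $2$, and the triangularity of $\mu$ with diagonal $d_i$ does make the telescoping work), but it is not proved anywhere in the paper, so your argument is complete only once that one computational lemma is supplied or properly cited. Granting it, the specialization $x=\delta$ gives invariance immediately, and your choice of $\gamma_c$ as the test vector---using $c\gamma_c=\gamma_c+\delta$ and the positive definiteness of $K$ on $V_\fin$---gives the sign with no case analysis and in fact pins down the exact constant $x_c=-2\phi_c$, which is sharper than what the paper states (and is consistent with the paper's computation in the simply-laced case). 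The trade-off is one extra identity up front in exchange for a cleaner, type-free endgame and a stronger conclusion.
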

\begin{proof}
Using first the definition of $x_c$, then \cref{omega invariant} ($n$ times), then the fact that~$\delta$ is fixed by the action of $W$, for any vector $v$ in $V$, we calculate
		\[
			\br{cx_c,v}=
			\br{x_c,c^{-1}v}=
			\omega_c(\delta,c^{-1}v)=
			\omega_c(c\delta,v)=
			\omega_c(\delta,v)=
			\br{x_c,v}. 
		\]
Thus $x_c$ is fixed by the action of $c$ on $V^*$.
By \cref{phic lemma}, $x_c$ and $\phi_c$ agree up to scaling.
To determine the scale factor, we again write $c=c_\laff s_\aff c_\gaff$ and recall from the definition of $\gamma_c$ that $K(\theta\ck,c_\gaff\gamma_c)=1$.
Using the invariance of $K$, we have $K(c_\gaff^{-1}\theta\ck,\gamma_c)=1$ or in other words $\br{\phi_c,c_\gaff^{-1}\theta\ck}=1$.
Again using \cref{omega invariant} several times, then the $W$-invariance of $\delta$, then the fact that $\omega_c$ is skew-symmetric, we calculate $\br{x_c,c_\gaff^{-1}\theta\ck}$ to be
\[\omega_c(\delta,c_\gaff ^{-1}\theta\ck)=\omega_{c_\gaff c_\laff s_\aff}(\delta,\theta\ck)=\omega_{c_\gaff c_\laff s_\aff}(f\alpha_\aff+\theta,\theta\ck)=f\omega_{c_\gaff c_\laff s_\aff}(\alpha_\aff,\theta\ck),\]
with $f=1$ except in type $A^{(2)}_{2k}$, where $f=2$.
Now $\omega_{c_\gaff c_\laff s_\aff}(\alpha_\aff,\theta\ck)$ equals $-\omega_{c_\gaff c_\laff s_\aff}(\theta\ck,\alpha_\aff)$, which by \cref{omega def} is negative.
\end{proof}

We make note of a formula for $x_c$ that is obtained by using \cref{omega def} to evaluate $x_c$ on each simple co-root.
We write $\set{\rho_i:i=1,\ldots,n}$ for the \newword{fundamental weights}, the basis of $V^*$ that is dual to the basis $\Simples\ck$ of co-roots.
\begin{lemma}\label{phic formula}
$\displaystyle x_c=\sum_{1\le i<j\le n}\bigl([\delta:\alpha_j]a_{ij}\rho_i-[\delta:\alpha_i]a_{ji}\rho_j\bigr)$.
\end{lemma}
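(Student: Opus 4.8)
The plan is to compute $x_c$ coordinate-by-coordinate in the basis $\set{\rho_i}$ of fundamental weights, exactly along the lines suggested by the statement. Since $\set{\rho_i}$ is dual to the basis $\Simples\ck$ of simple co-roots, every functional $\phi\in V^*$ satisfies $\phi=\sum_k\br{\phi,\alpha_k\ck}\rho_k$ (check by pairing both sides with each $\alpha_j\ck$). Applying this to $\phi=x_c$ reduces the lemma to evaluating $\br{x_c,\alpha_k\ck}=\omega_c(\delta,\alpha_k\ck)$ for each $k$ and verifying that the resulting coefficient of $\rho_k$ matches the asserted double sum.

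To evaluate $\omega_c(\delta,\alpha_k\ck)$, I would first move both arguments into the form in which \cref{omega def} is phrased, namely a co-root in the first slot and a root in the second. Writing $\delta=\sum_i[\delta:\alpha_i]\alpha_i=\sum_i[\delta:\alpha_i]d_i\alpha_i\ck$ and $\alpha_k\ck=d_k^{-1}\alpha_k$, bilinearity gives $\omega_c(\delta,\alpha_k\ck)=d_k^{-1}\sum_i[\delta:\alpha_i]d_i\,\omega_c(\alpha_i\ck,\alpha_k)$. Substituting the three cases of \cref{omega def} (the $i=k$ term vanishes) leaves $d_k^{-1}\bigl(\sum_{i>k}[\delta:\alpha_i]d_ia_{ik}-\sum_{i<k}[\delta:\alpha_i]d_ia_{ik}\bigr)$. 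The one nonformal step is clearing the symmetrizing factors: the symmetrizability relation $d_ia_{ik}=d_ka_{ki}$ turns each $d_k^{-1}d_ia_{ik}$ into $a_{ki}$, yielding $\br{x_c,\alpha_k\ck}=\sum_{i>k}[\delta:\alpha_i]a_{ki}-\sum_{i<k}[\delta:\alpha_i]a_{ki}$.

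Finally I would sum $\br{x_c,\alpha_k\ck}\rho_k$ over $k$ and regroup by unordered pairs. In the positive part $\sum_k\sum_{i>k}[\delta:\alpha_i]a_{ki}\rho_k$, relabeling each pair $(k,i)$ with $k<i$ as $(i,j)$ produces $\sum_{i<j}[\delta:\alpha_j]a_{ij}\rho_i$; in the negative part $\sum_k\sum_{i<k}[\delta:\alpha_i]a_{ki}\rho_k$, relabeling $k$ as $j$ produces $\sum_{i<j}[\delta:\alpha_i]a_{ji}\rho_j$, and the difference is precisely $\sum_{1\le i<j\le n}\bigl([\delta:\alpha_j]a_{ij}\rho_i-[\delta:\alpha_i]a_{ji}\rho_j\bigr)$. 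The computation is entirely routine, with no genuine obstacle; the only things to watch are the bookkeeping with the $d_i$ when passing between the root and co-root bases, and the reindexing, since a careless pass would scramble which of $a_{ij}$ and $a_{ji}$ is paired with which of $\rho_i$ and $\rho_j$.
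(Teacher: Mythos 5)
Your computation is correct and is exactly the argument the paper intends: the lemma is stated with only the remark that it "is obtained by using \cref{omega def} to evaluate $x_c$ on each simple co-root," which is precisely your expansion of $\br{x_c,\alpha_k\ck}=\omega_c(\delta,\alpha_k\ck)$ in the fundamental-weight basis, with the symmetrizing factors cleared via $d_ia_{ik}=d_ka_{ki}$. The reindexing at the end also matches the stated formula, so there is nothing to add.
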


\section{$c$-Orbits of roots}\label{sec:orbits} 
In this section, we prove \cref{aff c-orbits}.
We begin with a technical lemma.

\begin{lemma}[Cf. \protect{\cite[Lemma~1.6]{Dlab76}}]\label{lemma:change}
Suppose $\beta$ is a positive root in $\RS$. 
Then $c\beta$ is negative if and only if $\beta\in\TravInj{c}$, in which case $-c\beta\in\TravProj{c}$.
Also $c^{-1}\beta$ is negative if and only if $\beta\in\TravProj{c}$, in which case $-c^{-1}\beta\in\TravInj{c}$.
\end{lemma}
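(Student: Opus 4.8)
The plan is to prove the first assertion directly and then deduce the second by a symmetry argument, relating $c^{-1}$-statements to $c$-statements.

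Recall the definitions $\TravProj{c}=\set{\alpha_1,s_1\alpha_2,\ldots,s_1\cdots s_{n-1}\alpha_n}$ and $\TravInj{c}=\set{\alpha_n,s_n\alpha_{n-1},\ldots,s_n\cdots s_2\alpha_1}$. The key structural fact I would use is the description of inversions of a reduced word quoted in \cref{sec:root_systems}: for a reduced expression $r_1\cdots r_k$, the roots $r_1\cdots r_{i-1}\gamma_i$ (with $\gamma_i$ the simple root of $r_i$) are exactly the positive roots sent to negative roots, i.e.\ the inversions. Note that $\TravProj{c}$ is precisely the set of inversions of the particular reduced word $s_1\cdots s_n$ for $c$, reading the letters left to right.

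\emph{First} I would analyze when $c\beta$ is negative for a positive root $\beta$. A positive root $\beta$ has $c\beta$ negative exactly when $\beta$ is an inversion of $c^{-1}=s_n\cdots s_1$; reading that reduced word, its inversion set is exactly $\TravInj{c}$. So $c\beta<0$ iff $\beta\in\TravInj{c}$. \emph{Then}, to identify $-c\beta$, I would compute $c$ applied to the $j$-th element of $\TravInj{c}$, namely $\beta=s_n\cdots s_{j+1}\alpha_j$. Since $c=s_1\cdots s_n$, I expect the telescoping cancellation $c\beta = s_1\cdots s_n \cdot s_n\cdots s_{j+1}\alpha_j = s_1\cdots s_j\alpha_j = -s_1\cdots s_{j-1}\alpha_j$, using $s_j\alpha_j=-\alpha_j$. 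The vector $s_1\cdots s_{j-1}\alpha_j$ is exactly an element of $\TravProj{c}$, so $-c\beta\in\TravProj{c}$, as claimed.

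\emph{Finally}, the statements for $c^{-1}$ follow by applying the already-proved first half to the Coxeter element $c^{-1}=s_n\cdots s_1$, for which the roles of $\TravProj{}$ and $\TravInj{}$ are interchanged; alternatively, one observes $c^{-1}\beta<0$ iff $\beta$ is an inversion of $c$, which is $\TravProj{c}$, and then runs the same telescoping computation in reverse to land in $\TravInj{c}$. The main obstacle I anticipate is purely bookkeeping: making the telescoping cancellation and the indexing of the two transversals line up cleanly, and verifying that these computations are valid in the affine (infinite) setting. For that last point I would invoke \cref{Speyer}: since $(s_1\cdots s_n)^k$ is reduced, the words $s_1\cdots s_n$ and $s_n\cdots s_1$ are themselves reduced and the inversion-set description applies without complication, so no genuine difficulty arises beyond careful index tracking.
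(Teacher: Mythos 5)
Your proposal is correct and follows essentially the same route as the paper: both identify the positive roots made negative by $c$ (resp.\ $c^{-1}$) as the inversion roots read off the reduced word $s_n\cdots s_1$ (resp.\ $s_1\cdots s_n$), and both find the image by the same telescoping cancellation $c\,(s_n\cdots s_{j+1}\alpha_j)=s_1\cdots s_j\alpha_j=-s_1\cdots s_{j-1}\alpha_j\in-\TravProj{c}$. The only cosmetic difference is that the paper re-derives the sign-change criterion by hand (a root flips sign under $s_i$ only if it equals $\pm\alpha_i$) rather than quoting the inversion-set description, and it does not need \cref{Speyer} here, since a product of $n$ distinct simple reflections is automatically reduced.
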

\begin{proof}
A root $\beta$ changes sign under the action of a simple reflection $s_i$ if and only if $\beta\in\set{\pm\alpha_i}$.
Thus $c\beta$ is negative if and only if there exists an index $i\in\set{1,\ldots,n}$ such that $c=c_1s_ic_2$ and $c_2\beta=\alpha_i$, so that $\beta=c_2^{-1}\alpha_i\in\TravInj{c}$. 
In this case, $-c\beta=-c_1(-\alpha_i)\in\TravProj{c}$.
Similarly, $c^{-1}\beta$ is negative if and only if there exists $i\in\set{1,\ldots,n}$ with $c=c_1s_ic_2$ and $c_1^{-1}\beta=\alpha_i$, so that $\beta=c_1\alpha_i\in\TravProj{c}$.
In this case, $-c^{-1}\beta=-c_2^{-1}(-\alpha_i)\in\TravInj{c}$.
\end{proof}	

\cref{lemma:change} leads to the following proposition.
\begin{proposition}[Cf. \protect{\cite[Lemma~1.7]{Dlab76}}]\label{Uc compl}
If $\beta\in\RS\setminus \eigenspace{c}$ then there exists $k\in\mathbb{Z}$ such that $c^k\beta\in\TravInf{c}$.
\end{proposition}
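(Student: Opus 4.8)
The plan is to follow the orbit $\{c^k\beta : k \in \integers\}$ and track a single scalar invariant, the coefficient of $\delta$, showing it runs off to $+\infty$ in one direction along the orbit and to $-\infty$ in the other. A sign change must then occur, and at that sign change \cref{lemma:change} will deposit a root of the orbit into $\TravProj{c}$ or $\TravInj{c}$. First I would set up the relevant functional: since $V=V_\fin\oplus\reals\delta$, every $v$ has a unique expression $v=v_\fin+\ell(v)\delta$ with $v_\fin\in V_\fin$, defining a linear functional $\ell\in V^*$ with $\ell(\delta)=1$ and $\ell|_{V_\fin}=0$. The only property of $\ell$ I need is the sign rule recalled in \cref{sec:root_systems}: a real root $\mu$ with $\ell(\mu)>0$ is positive, and one with $\ell(\mu)<0$ is negative. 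I would also note at the outset that $\RS\setminus \eigenspace{c}$ contains no imaginary roots, because every imaginary root is a multiple of $\delta\in \eigenspace{c}$; hence $\beta$ is automatically a real root.

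Next I would exploit the eigenstructure of \cref{eigen}. Because $\eigenspace{c}$ is a hyperplane and $\gamma_c\notin \eigenspace{c}$, we may write $V=\eigenspace{c}\oplus\reals\gamma_c$ and decompose $\beta=b\gamma_c+v$ with $v\in \eigenspace{c}$. Using $(c-1)\gamma_c=\delta$ from \cref{eigen}\eqref{eigen 1 gen}, we get $c^k\beta=b\gamma_c+kb\,\delta+c^kv$, so that $\ell(c^k\beta)=kb+\ell(c^kv)$. Since $\eigenspace{c}$ is by definition a direct sum of eigenspaces of $c$, all of whose eigenvalues are roots of unity by \cref{eigen}\eqref{eigen 1} and \cref{eigen}\eqref{eigen neq 1}, the restriction $c|_{\eigenspace{c}}$ has finite order; hence $c^kv$ takes only finitely many values and $\ell(c^kv)$ stays bounded. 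It remains to check $b\neq0$: by \cref{eigen}\eqref{eigen orth} we have $K(\gamma_c,v)=0$, so $K(\gamma_c,\beta)=b\,K(\gamma_c,\gamma_c)$, and since $\gamma_c\in V_\fin$ with $K$ positive definite on $V_\fin$ the factor $K(\gamma_c,\gamma_c)$ is nonzero, while $K(\gamma_c,\beta)\neq0$ precisely because $\beta\notin \eigenspace{c}$. Therefore $b\neq0$ and $\ell(c^k\beta)=kb+(\text{bounded})$ tends to $+\infty$ and $-\infty$ as $k\to\pm\infty$.

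Finally I would extract the sign change. Assume $b>0$ (the case $b<0$ being symmetric). Then $\ell(c^k\beta)<0$ for $k$ sufficiently negative and $\ell(c^k\beta)>0$ for $k$ sufficiently positive, so by the sign rule the set $\{k:c^k\beta\text{ is positive}\}$ is nonempty and bounded below; let $k_0$ be its minimum. Then $c^{k_0}\beta$ is positive while $c^{k_0-1}\beta=c^{-1}(c^{k_0}\beta)$ is negative, and the second half of \cref{lemma:change} yields $c^{k_0}\beta\in\TravProj{c}\subseteq\TravInf{c}$. For $b<0$ one instead takes the maximum of $\{k:c^k\beta\text{ is positive}\}$ and applies the first half of \cref{lemma:change} to land in $\TravInj{c}$. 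In all cases some $c^k\beta\in\TravInf{c}$.

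The hard part is establishing that $\ell(c^k\beta)$ is genuinely unbounded in both directions; this is exactly where the generalized $1$-eigenvector $\gamma_c$ and the nondegeneracy $K(\gamma_c,\beta)\neq0$ are indispensable. Note that the oscillating term $\ell(c^kv)$ can prevent $\ell(c^k\beta)$ from being strictly monotone, so the argument cannot rest on a ``first descent'' but must instead use the $\pm\infty$ limits together with the extremal (minimum or maximum) value of $k$ for which $c^k\beta$ is positive.
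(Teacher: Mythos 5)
Your proof is correct and follows essentially the same route as the paper: both decompose $\beta$ using the generalized $1$-eigenvector $\gamma_c$, use $(c-1)\gamma_c=\delta$ together with the finite order of $c$ on $\eigenspace{c}$ to show that the $\delta$-direction coefficient of $c^k\beta$ grows linearly with nonzero slope (the paper reads this off as the $\alpha_\aff$-coordinate along the subsequence $c^{m\ell}\beta$), and then invoke \cref{lemma:change} at a sign change in the orbit. Your explicit extraction of the extremal $k$ just spells out the step the paper leaves implicit.
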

\begin{proof}
We will show that $\set{c^k\beta:k\in\integers}$ contains both positive and negative roots.
Then \cref{lemma:change} completes the proof.

We write $\beta=a\gamma_c+b\delta+v$ with $v\in \eigenspace{c}\cap V_\fin$.
Since $\beta\in\RS\setminus \eigenspace{c}$, we have $a\neq 0$.
By \cref{eigen}\eqref{eigen neq 1 details}, $c$ acts on $\eigenspace{c}$ with the same eigenvalues as $c_\laff t_\theta c_\gaff$. 
The latter is of finite order, so there is a positive integer $\ell$ such that $c^\ell v=v$. 
Using \cref{eigen}\eqref{eigen 1 gen} to compute $c\gamma_c$, we see that $c^{m\ell}\beta$ is $a\gamma_c+(b+m\ell a)\delta+v$ for any~$m\in\integers$.

Since $a\gamma_c+v\in V_\fin$, we have $[a\gamma_c+v:\alpha_\aff]=0$.  
Thus ${[c^{m\ell}\beta:\alpha_\aff]}$ equals ${(m\ell a+b)[\delta:\alpha_\aff]}$. 
Since $\ell a\neq 0$, by varying $m$, the expression ${(m\ell a+b)[\delta:\alpha_\aff]}$ can be made positive or negative.
\end{proof}

\begin{proof}[Proof of \cref{aff c-orbits}(\ref{criterion})]
  The proof of \cref{Uc compl} also establishes that for $\beta\in\RS\setminus \eigenspace{c}$, the set $\set{[c^m\beta:\alpha_\aff]:m\in\integers}$ is infinite and therefore the orbit of $\beta$ is infinite.
On the other hand, \cref{eigen}\eqref{eigen neq 1 details} implies that $c$ is diagonalizable on $\eigenspace{c}$ with eigenvectors that are roots of unity.
Thus $c$ has finite order on $\eigenspace{c}$.
\end{proof}

\cref{Uc compl} is also a step in the direction of proving \cref{aff c-orbits}\eqref{infinite transversal}.
To complete the proof, we need to show that the elements of $\TravInf{c}$ are in distinct infinite orbits.
We will see that this assertion is essentially equivalent to \cref{Speyer}.

\begin{proof}[Proof of \cref{aff c-orbits}(\ref{infinite transversal})] 
For any $k$, index the simple roots modulo $n$ to write the word $(s_1\cdots s_n)^k$ from \cref{Speyer} as $s_1s_2\cdots s_{kn}$.
Consider the $kn$ roots $\alpha_1$, $s_1\alpha_2$, $s_1s_2\alpha_3$, etc.
The conclusion of \cref{Speyer} is that these roots are all distinct and positive.
But these roots are exactly the roots $\TravProj{c}\cup c\TravProj{c}\cup\cdots\cup c^{k-1}\TravProj{c}$, which is therefore a disjoint union of $k$ sets of size $n$.
This is true for any $k\ge0$, and we conclude: first, that the $c$-orbit of any element of $\TravProj{c}$ is infinite; second, that $c^m\alpha$ is a positive root for any $\alpha\in\TravProj{c}$ and $m\ge0$; and third, that there do not exist $\alpha,\beta\in\TravProj{c}$ and $m\ge0$ such that $\beta=c^m\alpha$, unless $\alpha=\beta$ and $m=0$.
Since $\TravInj{c}=\TravProj{c^{-1}}$, we can also conclude that the $c$-orbit of any element of $\TravInj{c}$ is infinite, that $c^m\alpha$ is positive for $\alpha\in\TravInj{c}$ and $m\le0$, and that there do not exist $\alpha,\beta\in\TravInj{c}$ and $m\le0$ such that $\beta=c^m\alpha$, unless $\alpha=\beta$ and $m=0$.
It remains only to rule out the possibility that there exist $\alpha\in\TravProj{c}$ and $\beta\in\TravInj{c}$ such that $\beta=c^m\alpha$ for some integer $m$.
But in this case, if $m\ge 0$, then $c^{m+1}\alpha$ is a negative root by \cref{lemma:change}, contradicting the fact established above that $c^{m+1}\alpha$ is positive.
If $m<0$, then we reach a similar contradiction because $\alpha\in\TravInj{c^{-1}}$, $\beta\in\TravProj{c^{-1}}$ and $\alpha=c^{-m}\beta$.
\end{proof}

We pause in the proof of \cref{aff c-orbits} to make a useful observation that follows from the proof of \cref{aff c-orbits}(\ref{infinite transversal}).

\begin{proposition}[Cf. \protect{\cite[Proposition~1.9]{Dlab76}}]\label{useful}
The orbits of roots in $\TravProj{c}$ are separated from the orbits of roots in $\TravInj{c}$ by the hyperplane $\eigenspace{c}$.
Specifically, $K(\gamma_c,\beta)>0$ for $\beta\in c^{m}\TravProj{c}$ and $m\in\mathbb{Z}$, while 
$K(\gamma_c,\beta)<0$ for $\beta\in c^{m}\TravInj{c}$ and $m\in\mathbb{Z}$.
\end{proposition}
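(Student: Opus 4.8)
The plan is to reduce the sign of $K(\gamma_c,\beta)$ to the sign of a single coordinate of $\beta$, observe that this coordinate is constant along $c$-orbits, and then read off its sign from the positivity facts already established in the proof of \cref{aff c-orbits}\eqref{infinite transversal}. The key realization is that $K(\gamma_c,\beta)$ will turn out to be a fixed positive multiple of the $\gamma_c$-coordinate of $\beta$.

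First I would record that $K(\gamma_c,\,\cdot\,)$ is constant along $c$-orbits. This is exactly the content of \cref{phic lemma}, which says that $\phi_c=K(\gamma_c,\,\cdot\,)$ is fixed by the action of $c$ on $V^*$; equivalently, using the $W$-invariance of $K$ together with $c^{-1}\gamma_c=\gamma_c-\delta$ (from $(c-1)\gamma_c=\delta$ in \cref{eigen}\eqref{eigen 1 gen}) and $K(\delta,\,\cdot\,)=0$, one checks directly that $K(\gamma_c,c\beta)=K(\gamma_c,\beta)$ for all $\beta$. Hence it suffices to determine the sign of $K(\gamma_c,\beta)$ for $\beta\in\TravProj{c}$ and for $\beta\in\TravInj{c}$, and the conclusion then propagates to every $c^m\TravProj{c}$ and $c^m\TravInj{c}$.

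Next I would use the decomposition $V=\reals\gamma_c\oplus\reals\delta\oplus(\eigenspace{c}\cap V_\fin)$, which is legitimate since $\gamma_c\notin\eigenspace{c}$ (because $K(\gamma_c,\gamma_c)>0$) and $\eigenspace{c}=\reals\delta\oplus(\eigenspace{c}\cap V_\fin)$. Writing $\beta=a\gamma_c+b\delta+v$ as in the proof of \cref{Uc compl}, the facts $K(\gamma_c,\delta)=0$ and (by \cref{eigen}\eqref{eigen orth}) $K(\gamma_c,v)=0$ for $v\in\eigenspace{c}$ give $K(\gamma_c,\beta)=aK(\gamma_c,\gamma_c)$. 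Since $\gamma_c\in V_\fin$ and $K$ is positive definite on $V_\fin$, we have $K(\gamma_c,\gamma_c)>0$, so the sign of $K(\gamma_c,\beta)$ is precisely the sign of $a$; moreover $a\neq0$ whenever the orbit of $\beta$ is infinite, which holds for $\beta\in\TravInf{c}$ by \cref{aff c-orbits}\eqref{infinite transversal}.

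Finally I would pin down the sign of $a$ exactly as in the proof of \cref{Uc compl}, where the identity $[c^{m\ell}\beta:\alpha_\aff]=(m\ell a+b)[\delta:\alpha_\aff]$ was derived, with $\ell$ chosen so that $c^\ell v=v$ and with $[\delta:\alpha_\aff]>0$. For $\beta\in\TravProj{c}$, the proof of \cref{aff c-orbits}\eqref{infinite transversal} shows $c^m\beta$ is a positive root for all $m\ge0$, so $[c^{m\ell}\beta:\alpha_\aff]\ge0$, forcing $m\ell a+b\ge0$ for all $m\ge0$; letting $m\to\infty$ gives $a\ge0$, hence $a>0$ and $K(\gamma_c,\beta)>0$. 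For $\beta\in\TravInj{c}$, the same proof gives $c^m\beta$ positive for all $m\le0$, and letting $m\to-\infty$ forces $a\le0$, hence $a<0$ and $K(\gamma_c,\beta)<0$. The two bilinear-form evaluations are routine; the only points requiring care are ensuring $a\neq0$ (which is exactly the infiniteness of the orbits from part (1)) and keeping straight which half of each two-sided orbit stays positive for $\TravProj{c}$ versus $\TravInj{c}$.
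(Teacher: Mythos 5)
Your proof is correct, but it takes a genuinely different route from the paper's. You reduce everything to the sign of the $\gamma_c$-coordinate $a$ in the decomposition $\beta=a\gamma_c+b\delta+v$: the $c$-invariance of $K(\gamma_c,\,\cdot\,)$ (i.e.\ \cref{phic lemma}) handles all powers $c^m$ at once, positive definiteness of $K$ on $V_\fin$ gives $K(\gamma_c,\beta)=aK(\gamma_c,\gamma_c)$ with $K(\gamma_c,\gamma_c)>0$, and the asymptotics of $[c^{m\ell}\beta:\alpha_\aff]=(m\ell a+b)[\delta:\alpha_\aff]$ from the proof of \cref{Uc compl}, combined with the positivity of $c^m\beta$ for $m\ge0$ (resp.\ $m\le0$) established in the proof of \cref{aff c-orbits}\eqref{infinite transversal}, pins down the sign of $a$. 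The paper instead first uses \cref{lemma:change} to reduce to positive roots and then runs an induction on the roots $s_1\cdots s_{k-1}\alpha_k$ in which the Coxeter element varies by source-sink moves, with the base case supplied by \cref{xc lemma} (the identification of $K(\gamma_c,\,\cdot\,)$ with a negative multiple of $\omega_c(\delta,\,\cdot\,)$) and the inductive step by \cref{scs gamma}. Your argument is more self-contained relative to the material already proved at that point and avoids the $\omega_c$/$x_c$ machinery entirely; the paper's induction, on the other hand, exercises \cref{scs gamma} and \cref{xc lemma} and ties the sign condition directly to the combinatorics of the form $\omega_c$, which is the object of interest in the companion work. One small point worth making explicit in your write-up: the implication ``orbit infinite $\Rightarrow a\neq0$'' is the contrapositive of \cref{aff c-orbits}\eqref{criterion} (equivalently, $a=0$ forces $\beta\in\eigenspace{c}$), which is indeed proved before \cref{useful}, so there is no circularity.
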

\begin{proof} 
\cref{lemma:change} implies that a root $\beta$ is in the $c$-orbit of an element of $\TravInj{c}$ if and only if $-\beta$ is in the $c$-orbit of an element of $\TravProj{c}$.
  In particular it suffices to establish the claim for positive roots.
The set $\bigcup_{m\ge0}c^{m}\TravProj{c}$ is $\set{s_1\cdots s_{k-1}\alpha_k:k\ge1}$ with indices interpreted modulo~$n$.
Half of the proof is to show that $K(\gamma_c,s_1\cdots s_{k-1}\alpha_k)>0$ for all $k$.
We argue by induction, with $c$ allowed to vary.
If $k=0$, then $K(\gamma_c,\alpha_1)$ is a negative multiple of $\omega_c(\delta,\alpha_1)$ by \cref{xc lemma}.
Since $W$ is irreducible (because it is affine), we see from \eqref{omega def} that $\omega_c(\delta,\alpha_1)<0$.
If $k>0$, then by induction $K(\gamma_{s_1cs_1},s_2\cdots s_{k-1}\alpha_k)>0$.
By \cref{scs gamma} and \eqref{simpref}, $\gamma_c$ differs from $s_1\gamma_{s_1cs_1}$ by a multiple of $\delta$.
Since $K$ is $W$-invariant and since $K(\delta,\,\cdot\,)=0$, we see that $K(\gamma_c,s_1\cdots s_{k-1}\alpha_k)>0$.
The other half of the proof is symmetric, switching $c$ with~$c^{-1}$.
\end{proof}

The remaining parts of \cref{aff c-orbits} concern finite orbits.
One of them, \cref{aff c-orbits}(\ref{im orb}), is trivial because the action of $W$ fixes all imaginary roots.
Another of them is now easily proved:
\cref{aff c-orbits}\eqref{infinite transversal} combines with \cref{lemma:change} to imply that roots only change sign within infinite orbits, and \cref{aff c-orbits}(\ref{pos neg}) follows.

It remains to prove \cref{aff c-orbits}(\ref{finite transversal},\ref{finite transversal finite}).
The easiest assertion left to prove is that there are infinitely many finite $c$-orbits of real roots, or equivalently (in light of \cref{aff c-orbits}\eqref{criterion}) that there are infinitely many real roots in $\eigenspace{c}$.
We begin by showing that there exist real roots in $\eigenspace{c}$, specifically, roots in $\RSfin\cap \eigenspace{c}$.
Recall the notation $\RSTfin{c}$ for the intersection $\RSfin\cap \eigenspace{c}$.
Since $\eigenspace{c}$ is a subspace, $\RSTfin{c}$ is a (necesarily finite) root subsystem of $\RSfin$, but \textit{a priori} it may be empty.
To the contrary, we will see that it is as large as it could be given the dimension of $\eigenspace{c}\cap V_\fin$.
Understanding the structure of $\RSTfin{c}$ will turn out to be critically important.

\begin{proposition}\label{Up details}
Let $\RSTfin{c}$ be the finite root system $\RSfin\cap \eigenspace{c}$ and write $c=c_\laff s_\aff c_\gaff$.
\begin{enumerate}[\qquad \upshape (1)]
\item \label{Upsilon full rank}
$\RSTfin{c}$ has rank $n-2$.
\item \label{type A}
The irreducible components  of $\RSTfin{c}$ are all of finite type $A$.  
\item \label{canon order}
The roots in the symple system $\SimplesT{c}_\fin$ of $\RSTfin{c}$ can be ordered $\beta_1,\ldots,\beta_{n-2}$ so that $t_{\beta_1}\cdots t_{\beta_{n-2}}=c_\laff t_\theta c_\gaff$.
\item \label{Omega indep}
The order $\beta_1,\ldots,\beta_{n-2}$ of \eqref{canon order} may not be unique, but as long as $t_{\beta_1}\cdots t_{\beta_{n-2}}$ is $c_\laff t_\theta c_\gaff$, the set $\TravReg{c}=\set{\beta_1,t_{\beta_1}\beta_2,\ldots,t_{\beta_1}\cdots t_{\beta_{n-2}}\beta_{n-2}}$ does not depend on the order.
\end{enumerate}
\end{proposition}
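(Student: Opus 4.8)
The plan is to reduce every assertion to a statement about the single element $w:=c_\laff t_\theta c_\gaff$ of the finite Weyl group $W_\fin$. By \cref{eigen}\eqref{eigen 1 gen} its fixed space in $V_\fin$ is the line $\reals\gamma_c$, so $\eigenspace{c}\cap V_\fin=\gamma_c^\perp\cap V_\fin$ is exactly the moved space (orthogonal complement of the fixed space) of $w$, and $\RSTfin{c}=\RSfin\cap\eigenspace{c}=\RSfin\cap\gamma_c^\perp$. For \eqref{Upsilon full rank}, recall that the proof of \cref{eigen} shows a shortest reflection factorization of $w$ in $W_\fin$ has length at most $n-2$; since the fixed space is one–dimensional, \cref{reduced T} forces the length to equal $n-2$ and produces positive roots $\gamma_1,\dots,\gamma_{n-2}$ with $w=t_{\gamma_1}\cdots t_{\gamma_{n-2}}$ forming a basis of $\eigenspace{c}\cap V_\fin$. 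Each $\gamma_i$ lies in $\RSfin\cap\eigenspace{c}=\RSTfin{c}$, so $\RSTfin{c}$ spans this $(n-2)$–dimensional space and has rank $n-2$.

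For \eqref{canon order}, set $c_\fin:=c_\laff c_\gaff$, a Coxeter element of $W_\fin$ of reflection length $n-1$. Since $t_{c_\laff\theta}=c_\laff t_\theta c_\laff^{-1}$, we get $t_{c_\laff\theta}\,w=c_\laff t_\theta t_\theta c_\gaff=c_\fin$, and comparing reflection lengths, $1+(n-2)=n-1$, shows this factorization is reduced (equivalently, $w$ lies below $c_\fin$ in the absolute order on $W_\fin$). By the Brady--Watt/Bessis theory of reflection factorizations below a Coxeter element (\cref{reduced T}, \cref{bessis lemma}), $w$ is a Coxeter element of the parabolic subgroup $W'$ that pointwise stabilizes $\reals\gamma_c$; the root system of $W'$ is $\RSfin\cap\gamma_c^\perp=\RSTfin{c}$, so $W'=W(\RSTfin{c})$. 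Compatibility with the specific simple system $\SimplesT{c}_\fin$ follows because the factorization sits below $c_\fin$ with respect to the ambient positive system, and then "$w$ is a Coxeter element of $W'$" says precisely that the simple reflections of $\SimplesT{c}_\fin$ can be multiplied in some order to give $w$.

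For \eqref{Omega indep}, any ordering $\beta_1,\dots,\beta_{n-2}$ of $\SimplesT{c}_\fin$ with $t_{\beta_1}\cdots t_{\beta_{n-2}}=w$ is a reduced word for the Coxeter element $w$ in the Coxeter generators of $W'$. By the Tits--Matsumoto theorem any two such words are connected by braid moves, and since each generator occurs exactly once, only commutation moves $t_{\beta_i}t_{\beta_{i+1}}=t_{\beta_{i+1}}t_{\beta_i}$ (available precisely when $\beta_i\perp\beta_{i+1}$) can apply. Such a move replaces the consecutive cumulative roots $t_{\beta_1}\cdots t_{\beta_{i-1}}\beta_i$ and $t_{\beta_1}\cdots t_{\beta_i}\beta_{i+1}$ by the same two roots in the other order (using $t_{\beta_i}\beta_{i+1}=\beta_{i+1}$ for orthogonal roots) and fixes every other cumulative root, so the set $\TravReg{c}$ is preserved.

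The main obstacle is \eqref{type A}, that every component of $\RSTfin{c}$ is of type $A$. I would argue spectrally, via the characterization that an irreducible finite Weyl group of rank $r$ is of type $A_r$ if and only if its Coxeter number is $r+1$, equivalently if and only if the eigenvalues of its Coxeter element are exactly the nontrivial $(r+1)$st roots of unity. Since $w$ is a Coxeter element of $W'$ and preserves each irreducible component, it suffices to show that the eigenvalues of $w$ on $\eigenspace{c}\cap V_\fin$ — equivalently, by \cref{eigen}\eqref{eigen neq 1 details}, the non-unit eigenvalues of $c$ — assemble component by component into complete sets of nontrivial roots of unity. The difficulty is that the Coxeter spectrum of any non–simply-laced or branching type is never such a complete set (for instance $B_2$ contributes $\{\pm i\}$ but not $-1$), so one must compute the characteristic polynomial of the affine Coxeter element uniformly and show that it factors as $(x-1)^2\prod_j\tfrac{x^{d_j}-1}{x-1}$ with the factors aligned to the components of $\RSTfin{c}$. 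This is where the affine eigenvalue analysis underlying \cref{eigen} (and the results of \cite{AC,BLM,Coleman,Howlett,SteinbergEigen}) must be pushed through, with care over the reducibility of $\RSTfin{c}$; I expect this spectral bookkeeping, rather than parts \eqref{Upsilon full rank}, \eqref{canon order}, \eqref{Omega indep}, to carry the real weight of the proposition.
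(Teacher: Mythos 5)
There are two genuine gaps here, and they sit exactly where the weight of the proposition lies. First, part \eqref{type A} is not proved: you correctly identify it as the hard step, sketch a spectral strategy (show the non-unit eigenvalues of $c$ assemble, component by component of $\RSTfin{c}$, into complete sets of nontrivial roots of unity), and then stop, conceding that the ``spectral bookkeeping'' remains to be done. That bookkeeping is precisely the difficulty --- nothing in \cref{eigen} aligns eigenvalues with components of $\RSTfin{c}$ --- and the paper avoids it entirely. Its argument is combinatorial: for each component $\Theta$ of rank $k$ one forms the affinization $\widetilde\Theta=\RS\cap\Span(\Theta\cup\set{\delta})$, observes that $c$ preserves $\widetilde\Theta$ and its positive roots and hence permutes the canonical simple system of $\widetilde\Theta$ (a set of $k+1$ roots), and notes via \cref{bessis lemma} and \cref{fin c-orbits} that every such orbit has size equal to the Coxeter number $h$ of $\Theta$; since $h>k$ and the simple system has only $k+1$ elements, it is a single orbit and $h=k+1$, so $c$ induces a single-orbit automorphism of an affine diagram, forcing type $A^{(1)}_k$. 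You would need either to carry out your uniform characteristic-polynomial computation (hard, and not obviously type-free) or to adopt something like this affinization argument.

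Second, your proof of \eqref{canon order} has a gap at the phrase ``$w$ is a Coxeter element of $W'$ says precisely that the simple reflections of $\SimplesT{c}_\fin$ can be multiplied in some order to give $w$.'' It does not. \cref{bessis lemma} only gives that $w=c_\laff t_\theta c_\gaff$ is \emph{conjugate} to a Coxeter element of a standard parabolic of $W_\fin$, and even the stronger statement that $w$ is a ``dual'' Coxeter element of $W'$ (reflection length equal to rank, lying below $c_\laff c_\gaff$ in absolute order) does not by itself yield a factorization through the specific canonical simple system $\SimplesT{c}_\fin$: in $S_4$ the $4$-cycle $(1\,3\,2\,4)$ has reflection length $3$ but is not the product of any permutation of $\set{(1\,2),(2\,3),(3\,4)}$. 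Some additional input is required to pin down the simple system, and in the paper that input again comes from the type-$A$ analysis: once one knows $c$ rotates the $A^{(1)}_k$ diagram of $\widetilde\Theta$, the action of $t_1\cdots t_k$ on $\Theta$ is $\beta_i\mapsto\beta_{i+1}$, which coincides with the action of $t_{\beta_1}\cdots t_{\beta_k}$ for the canonical simple roots ordered along the diagram, whence $t_1\cdots t_k=t_{\beta_1}\cdots t_{\beta_k}$ by faithfulness. Your treatments of \eqref{Upsilon full rank} and \eqref{Omega indep} are correct and essentially the paper's (the commutation-move argument for \eqref{Omega indep} is just an expanded form of ``inversion sets are independent of the reduced word''), but \eqref{type A} and \eqref{canon order} as written are incomplete.
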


\begin{proof}
We already showed (in the proof of \cref{eigen}) that the fixed space of $c_\laff t_\theta c_\gaff$ (in $V_\fin$) is spanned by $\gamma_c$.
Since there exists some minimal expression for $c_\laff t_\theta c_\gaff$ as a product of reflections in $W_\fin$, \cref{reduced T} implies that $\RSfin$ contains a basis for $\set{v\in V_\fin:K(\gamma_c,v)=0}=\eigenspace{c}\cap V_\fin$.
This space is $(n-2)$-dimensional, so the basis has $n-2$ vectors, and therefore $\RSTfin{c}$ has rank $n-2$, and we have proven~\eqref{Upsilon full rank}.

Furthermore, when we write the expression $t_1\cdots t_{n-2}$ for $c_\laff t_\theta c_\gaff$, we can define $t_{n-1}$ to be the reflection $c_\gaff^{-1}t_\theta c_\gaff$, so that $t_1\cdots t_{n-1}=c_\laff c_\gaff$.
Since $c_\laff c_\gaff$ is a Coxeter element for $W_\fin$, \cref{bessis lemma} says that $t_1\cdots t_{n-2}$ is conjugate to a Coxeter element of some parabolic subgroup of $W_\fin$.

Define $\Proj_\fin:V\to V_\fin$ to be the linear map that fixes $V_\fin$ pointwise and sends $\delta$ to zero.
By \cref{simpref plus}, we conclude that $c_\laff t_\theta c_\gaff$ and $\Proj_\fin\circ\,c$ coincide, as maps on~$V_\fin$.

Now consider any irreducible component $\Theta$ of $\RSTfin{c}$ and any expression $t_1\cdots t_{n-2}$ for $c_\laff t_\theta c_\gaff$ as a product of $n-2$ reflections in $W_\fin$.
Since the reflections $t_i$ all correspond to roots in $\RSTfin{c}$ and since $\Theta$ is an irreducible component, we can, by transposing commuting reflections if necessary, assume that the reflections corresponding to roots in $\Theta$ are $t_1,\ldots,t_k$ for some $k$.
Since $t_1\cdots t_{n-2}$ is conjugate to a Coxeter element of some (possibly reducible) parabolic subgroup of $W_\fin$ the same conjugation takes~$t_1\cdots t_k$ to a Coxeter element of a (typically smaller and always irreducible) parabolic subgroup.
The elements $t_1\cdots t_k$, $c_\laff t_\theta c_\gaff$, and $\Proj_\fin\circ\,c$ all have the same action on $\Theta$.

Let $\widetilde\Theta$ be the root subsystem $\RS\cap\Span(\Theta\cup\set{\delta})$ of $\RS$.
Then $\widetilde\Theta$ is an affine root system whose associated finite root system is $\Theta$.
The action of $c$ fixes $\widetilde\Theta$ as a set and takes positive roots to positive roots, so the canonical simple system for $\widetilde\Theta$ is a union of $c$-orbits.
(If some power of $c$ maps a root in the canonical simple system to a root not in the canonical simple system, then that power of $c$ maps the canonical simple system to a different simple system defining the same positive roots, and that is a contradiction.)
These orbits are the same size as $t_1\cdots t_k$-orbits of roots in $\Theta$, and since $t_1\cdots t_k$ is conjugate to a Coxeter element, \cref{fin c-orbits} says that each orbit has size $h$, the Coxeter number of $\Theta$.
The Coxeter number is always strictly greater than the rank and the canonical simple system of $\widetilde\Theta$ consists of $k+1$ roots.
We conclude that the canonical simple system is a single $c$-orbit.

We now consider what the type of $\widetilde\Theta$ might be.
The element $c$ restricts to an automorphism of its Dynkin diagram, and this automorphism has a single orbit.
Since each affine Dynkin diagram is either a tree or a cycle (the latter with only single edges), we conclude that $\widetilde\Theta$ is of type $A^{(1)}_k$.
Thus $\Theta$ is of type $A_k$.
(Alternately, we reason based on the fact that the Coxeter number of $\Theta$ is one more than its rank.)
This completes the proof of \eqref{type A}.

Numbering the simple roots of $\Theta$ as $\beta_1$ through $\beta_k$ linearly along the Dynkin diagram, and numbering the additional simple root of $\widetilde\Theta$ as $\beta_0$, the action of $c$ on $\widetilde\Theta$ is by the diagram automorphism $i\mapsto i+1$ (mod $k+1$).
Thus $t_1\cdots t_k$ acts by $\beta_i\mapsto\beta_{i+1}$ for $i=1,\ldots k-1$ and by $\beta_k\mapsto-(\beta_1+\cdots+\beta_k)\mapsto\beta_1$.
The same action is accomplished by the Coxeter element $t_{\beta_1}\cdots t_{\beta_k}$ of $\Theta$, and we conclude that $t_1\cdots t_k=t_{\beta_1}\cdots t_{\beta_k}$.
Since $\Theta$ is an irreducible component of $\RSTfin{c}$ (and since reflections in different irreducible components commute), by induction of the number of irreducible components, we see that an appropriate numbering of the simple roots of $\RSTfin{c}$ yields $t_1\cdots t_{n-2}=t_{\beta_1}\cdots t_{\beta_{n-2}}$.
This is \eqref{canon order}.

Finally, we prove \eqref{Omega indep}.
Since $\set{\beta_i:i=1,\ldots,n-2}$ is a simple system for $\RSTfin{c}$, the element $c_\laff t_\theta c_\gaff$ is a Coxeter element for the associated Weyl group.
Each orderings on the simple system such that $t_{\beta_1}\cdots t_{\beta_{n-2}}=\Proj_\fin\circ\,c$ is a reduced word for $\Proj_\fin\circ\,c$.
The set $\TravReg{c}$ is the set of positive roots associated to the inversions of $\Proj_\fin\circ\,c$ and is thus independent of the choice of reduced word.
\end{proof}

\begin{remark}
Since the sub-root system $\RSTfin{c}$ has only type-$A$ components, there is a straightforward characterization of the roots $\{\beta_j\}_{j\in [1,n-2]}$ for any choice of Coxeter element $c$. 
These are the roots in $\set{\alpha\in \RSfinpos: K(\gamma_c,\alpha)=0}$ that are not sums of other roots in the same set.
\end{remark}

The proof of \cref{Up details} also establishes the following fact:  
\begin{proposition}\label{c on tUp components}
Each component of $\RST{c}$ is of affine type $A^{(1)}$.
The action of $c$ on each component is to rotate the Dynkin diagram of the component, taking each node to an adjacent node (or when the component has rank two, to transpose the two nodes of the Dynkin diagram).
\end{proposition}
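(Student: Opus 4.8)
The plan is to recognize that the components of $\RST{c}$ are exactly the affine root systems $\widetilde\Theta=\RS\cap\Span(\Theta\cup\set{\delta})$ introduced in the proof of \cref{Up details}, one for each irreducible component $\Theta$ of $\RSTfin{c}$, and then to read off the two asserted facts directly from that proof: that each $\widetilde\Theta$ has type $A^{(1)}_k$ (with $\Theta$ of type $A_k$) and that $c$ acts on it as the cyclic rotation $i\mapsto i+1\pmod{k+1}$ of its Dynkin diagram.

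First I would establish $\RST{c}=\bigcup_\Theta\widetilde\Theta$. By \cref{eigen}\eqref{eigen orth}, a vector lies in $\eigenspace{c}$ precisely when it is $K$-orthogonal to $\gamma_c$. Since $\delta$ is in the radical of $K$, every imaginary root lies in $\eigenspace{c}$ and hence in every $\widetilde\Theta$. A real root is a positive scaling of some $\beta+k\delta$ with $\beta\in\RSfin$, and $K(\gamma_c,\beta+k\delta)=K(\gamma_c,\beta)$; thus such a root lies in $\eigenspace{c}$ if and only if $\beta\in\RSfin\cap\eigenspace{c}=\RSTfin{c}$. Writing $\Theta$ for the component of $\RSTfin{c}$ containing $\beta$, the root then lies in $\Span(\Theta\cup\set{\delta})$ and so in $\widetilde\Theta$. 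The reverse containment $\widetilde\Theta\subseteq\RST{c}$ is immediate because $\Span(\Theta\cup\set{\delta})\subseteq\eigenspace{c}$.

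Next I would justify calling the $\widetilde\Theta$ the components. The real roots of $\widetilde\Theta$ arising from distinct $\Theta$ are mutually $K$-orthogonal, since the components of $\RSTfin{c}$ are; the only roots shared between different $\widetilde\Theta$ are the imaginary multiples of $\delta$. This is exactly the situation anticipated in \cref{tUp remark}, where a single adjoined vector $\delta$ serves all components. I would also note, as in the proof of \cref{Up details}, that $c$ fixes each $\widetilde\Theta$ setwise rather than permuting them: by \cref{simpref plus}, for $v\in\Span(\Theta)\subseteq V_\fin$ we have $cv=c_\laff t_\theta c_\gaff v+K(\theta\ck,c_\gaff v)\delta$, and $c_\laff t_\theta c_\gaff$ restricts on $\Span(\Theta)$ to a Coxeter element of $\Theta$, so $cv$ remains in $\Span(\Theta\cup\set{\delta})$.

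With this identification in place, the heart of the proposition has already been carried out inside the proof of \cref{Up details}: each $\widetilde\Theta$ is of affine type $A^{(1)}_k$, its canonical simple system forms a single $c$-orbit, and $c$ acts on the $(k+1)$-node cyclic diagram as the rotation $i\mapsto i+1$. The parenthetical rank-two case is simply $k=1$, where $\widetilde\Theta$ is of type $A^{(1)}_1$ and the rotation $i\mapsto i+1\pmod{2}$ transposes the two nodes. I expect the only delicate point to be the bookkeeping forced by the shared vector $\delta$: one must phrase the orthogonality and the disjointness statements in terms of the \emph{real} roots of the components, so as not to clash with the fact that every $\widetilde\Theta$ contains all of $\integers\delta$.
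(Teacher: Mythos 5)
Your proposal is correct and matches the paper's approach: the paper offers no standalone proof, stating only that the result is established within the proof of Proposition~\ref{Up details}, which is exactly the source you draw on. Your explicit verification that $\RST{c}=\bigcup_\Theta\widetilde\Theta$ and that $c$ preserves each $\widetilde\Theta$ merely fills in bookkeeping the paper leaves implicit (and which Remark~\ref{tUp remark} already flags), so there is nothing to correct.
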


\begin{proof}[Proof of \cref{aff c-orbits}(\ref{finite transversal},\ref{finite transversal finite})]
  If $\RS$ is of rank $2$, then $\eigenspace{c}=\reals\delta$, so \cref{aff c-orbits}\eqref{criterion} implies that there are no finite $c$-orbits of real roots.
If $\RS$ is of rank at least $3$, then $\RSTfin{c}$ has rank at least $1$, and thus has at least one irreducible component.  
In particular, $\RST{c}$ contains at least one infinite root subsystem (called $\widetilde\Theta$ in the proof of \cref{Up details}\eqref{type A}) so there are infinitely many finite $c$-orbits.

Let $\Theta$ be an irreducible component of $\RSTfin{c}$ and write $\widetilde\Theta=\RS\cap{\Span(\Theta\cup\set{\delta})}$. 
Ordering the canonical simple system of $\Theta$ as $\beta_1,\ldots,\beta_k$ as in the proof of \cref{Up details}, the intersection $\TravReg{c}\cap\Theta$ is 
\[\set{\beta_1,t_{\beta_1}\beta_2,\ldots,t_{\beta_1}\cdots t_{\beta_{k-1}}\beta_k}=\set{\beta_1,\beta_1+\beta_2,\ldots,\beta_1+\cdots+\beta_k}.\]
Write $\beta_0$ for the root $c\beta_k$.
In light of \cref{Up details}\eqref{type A} and \cref{c on tUp components}, we see that $\beta_0+\beta_1+\cdots+\beta_k$ is in the line spanned by $\delta$, and thus equals $\kappa\delta$ for some positive integer $\kappa$.
Furthermore, we see that $\kappa(\beta)=\kappa$ for every $\beta\in\TravReg{c}\cap\Theta$.

The Coxeter element $c$ acts by $\beta_i\mapsto\beta_{i+1}$ (mod $k+1$).
Since the positive roots of $\Theta$ are of the form $\beta_i+\cdots+\beta_j$ for $1\le i\le j\le k$, we see that every positive root of $\Theta$ is in the $c$-orbit of exactly one root in $\TravReg{c}\cap\Theta$.
The positive roots of $\widetilde\Theta$ are positive roots of $\Theta$ plus nonnegative multiples of $\kappa\delta$ or negative roots of $\Theta$ plus positive multiples of $\kappa\delta$.
Thus every positive root of $\widetilde\Theta$ is contained in the $c$-orbit of exactly one root of the form $\beta+m\cdot\kappa\delta$ with $\beta\in\TravReg{c}\cap\Theta$ and $m\ge0$.
Applying the antipodal map, we see that every negative root of $\widetilde\Theta$ is in the $c$-orbit of exactly one root of the form $-\beta+m\cdot\kappa\delta$ with $\beta\in\TravReg{c}\cap\Theta$ and $m\le0$.
Since $-\beta+\kappa\delta$ is in the $c$-orbit of some element of $\TravReg{c}\cap\Theta$, we see that every negative root of $\widetilde\Theta$ is in the $c$-orbit of exactly one root of the form $\beta+m\cdot\kappa(\beta)\delta$ with $\beta\in\TravReg{c}\cap\Theta$ and $m<0$.

We have proved \cref{aff c-orbits}(\ref{finite transversal},\ref{finite transversal finite}) in the case where $\RSTfin{c}$ has exactly one irreducible component.
Since roots in different irreducible components are orthogonal, the general case of (\ref{finite transversal},\ref{finite transversal finite}) follows easily.
\end{proof}


	\bibliographystyle{plain}
	\bibliography{bibliography}
  \vspace{-0.175 em}

\end{document}